\newtheorem{thm}{Theorem}[section] 
\newtheorem{prop}[thm]{Proposition}
\newtheorem{teo}[thm]{Theorem}
\newtheorem{mainteo}{Theorem}
\theoremstyle{definition}
\newtheorem{defn}[thm]{Definition}
\newtheorem{oss}[thm]{Remark}
\newtheorem{conj}[thm]{Conjecture}
\newcommand{\R}{\mathbb{R}}
\newcommand{\HH}{\mathbb{H}}
\newcommand{\vol}{\mathrm{vol}}
\NewDocumentCommand{\norm}{m O{}}{{\left\lVert#1\right\rVert_{#2}}}
\let\temp\phi
\let\phi\varphi
\let\varphi\temp
\let\temp\epsilon
\let\epsilon\varepsilon
\let\varepsilon\temp
\title[Bounded volume class and Cheeger isoperimetric constant]{Bounded volume class and Cheeger isoperimetric constant for negatively curved manifolds}
\author{Ervin Had\v{z}iosmanovi\'c}
\address{Scuola Normale Superiore, Piazza dei Cavalieri 7, 56126 Pisa, Italy}
\email{ervin.hadziosmanovic@sns.it}
\keywords{Bounded cohomology, Cheeger isoperimetric constant, bounded primitives, currents}
\subjclass{53C23, 58A25, 20F67}
\begin{document}

\maketitle

\begin{abstract}
We prove that for manifolds with negative curvature bounded away from $0$ of infinite volume and bounded geometry, the bounded fundamental class, defined via integration of the volume form over straight top-dimensional simplices, vanishes if and only if the Cheeger isoperimetric constant is positive. This gives a partial affirmative answer to a conjecture of Kim and Kim. Furthermore, we show that for all manifolds with negative curvature bounded away from $0$ of infinite volume, the positivity of the Cheeger constant implies the vanishing of the bounded volume class, solving one direction of the conjecture in full generality.
\end{abstract}

\section{Introduction}
In this paper, all manifolds will be assumed to be Riemannian, orientable, connected, complete and without boundary if not otherwise stated.
For an $n$-manifold $M$, being non-compact is equivalent to the vanishing of its top degree cohomology, that is $H^n(M,\R)=0$. This condition is equivalent to the vanishing of the de Rham class of the volume form $\omega$ induced by the metric. If the manifold has negative curvature bounded away from $0$, there is a well defined \textit{bounded volume class} $[\hat{\omega}]\in H_b^n(M,\R)$ in the $n$-dimensional bounded cohomology of $M$, which is defined as the class of the cocycle which corresponds to integration of the volume form on straight simplices (see \cref{bounded_volume_class} for a precise definition). One can ask whether it is possible to obtain a characterisation of its vanishing in terms of geometric properties. This problem was first tackled by Soma \cite{soma1997bounded}, in his study of bounded cohomology of Kleinian groups in degree $3$. In the context of infinite volume hyperbolic $3$-manifolds, he provides a complete answer in terms of \textit{geometric finiteness}:

\begin{teo}[{\cite[Theorem 1]{soma1997bounded}}]\label{soma}
	Let $M$ be an infinite volume hyperbolic $3$-manifold with finitely generated fundamental group. Then the bounded fundamental class $[\hat{\omega}]\in H^{3}_b(M,\R)$ vanishes if and only if $M$ is geometrically finite.
\end{teo}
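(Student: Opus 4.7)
The plan follows a dichotomy from the structure theory of hyperbolic $3$-manifolds with finitely generated fundamental group: by work of Thurston and Bonahon, each end of $M=\HH^3/\Gamma$ is either \emph{geometrically finite} or \emph{simply degenerate}, and $M$ itself is geometrically finite iff all its ends are. I would prove the two implications separately, in each case by analysing the ends individually and combining contributions via a partition of unity.

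For the direction ``geometrically finite $\Rightarrow$ $[\hat{\omega}]=0$'' I would exploit the fact that the convex core $C(M)$ has finite volume and that the nearest-point retraction $r\colon M\to C(M)$ is $1$-Lipschitz. Split the volume cocycle into a contribution from $C(M)$ and one from each end of $M\setminus C(M)$, homeomorphic to $\Sigma_g\times[0,\infty)$ with exponentially flaring geometry. On each such end, integrating $\omega$ along the outward radial flow produces an explicit bounded primitive on straight simplices: the volume contained in the straight simplex on vertices far out in the end is controlled by the area of a pleated surface spanning them, hence by a universal constant depending only on the topology. The contribution from $C(M)$ is handled via the known vanishing of the bounded volume class for finite-volume hyperbolic manifolds after Gromov.

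For the converse I would argue by contrapositive. If $M$ admits a simply degenerate end $E$, Thurston's theorem provides a sequence of simple closed geodesics $\gamma_n\subset E$ exiting to infinity, each carried by a pleated surface $f_n\colon S\to E$ with $\chi(S)$ fixed and area universally bounded by $2\pi|\chi(S)|$. Let $R_n\subset E$ be the region between $f_n(S)$ and $f_{n+1}(S)$, so that $\vol(R_n)\to\infty$. Straightening a triangulation of $R_n$ produces a relative $3$-cycle with boundary on $f_n(S)\cup f_{n+1}(S)$ whose simplicial $\ell^1$-norm is bounded uniformly in $n$ by a constant multiple of $|\chi(S)|$. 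Pairing with $\hat{\omega}$ returns $\vol(R_n)$, while a bounded primitive $\beta$ of $\hat{\omega}$ would force the pairing to reduce to $\langle\beta,\partial[R_n]\rangle=O(\norm{\beta}[\infty]\,|\chi(S)|)=O(1)$, a contradiction.

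The main obstacle is the quantitative construction in the non-vanishing direction: producing straight relative cycles for $R_n$ whose simplicial norm is controlled by the topology of the bounding pleated surfaces alone, independently of $\vol(R_n)$. This is where the geometric rigidity of pleated surfaces in $\HH^3$, Thurston's straightening procedure, and an isoperimetric estimate inside the thick part of the end play the decisive role; controlling the thin part (Margulis tubes around $\gamma_n$ and rank-one/two cusps) adds a further technical layer that must be absorbed into the bound on $|\chi(S)|$.
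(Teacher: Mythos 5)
This statement is quoted from Soma's paper and is not proved in the text under review; the closest the paper comes is its Theorem B (via \cref{Teo}), which recovers the direction ``geometrically finite $\Rightarrow$ vanishing'' by passing through $h(M)>0$ and a mass-isoperimetric inequality for straight chains, a route quite different from yours. Your overall dichotomy (geometrically finite ends versus simply degenerate ends, Bonahon/Thurston) is the right frame, and the contrapositive direction is structured essentially as in Soma. But there is a genuine error at the heart of that direction. You claim that straightening a triangulation of the region $R_n$ between consecutive pleated surfaces yields a relative $3$-cycle $z_n$ ``whose simplicial $\ell^1$-norm is bounded uniformly in $n$ by a constant multiple of $|\chi(S)|$,'' and you later flag producing such cycles as ``the main obstacle.'' This is not an obstacle to be overcome: it is impossible, and it directly contradicts the very inequality you want to exploit. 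Since straight $3$-simplices in $\HH^3$ have volume at most $v_3$, any straight relative cycle with $\hat{\omega}(z_n)=\vol(R_n)$ satisfies $\norm{z_n}[1]\ge \vol(R_n)/v_3\to\infty$. What the argument actually requires is only that the \emph{boundary} $\partial z_n$ be representable by a straight $2$-cycle of $\ell^1$-norm $O(|\chi(S)|)$ (which is where the bounded area $2\pi|\chi(S)|$ of pleated surfaces enters); then $\vol(R_n)=\hat{\omega}(z_n)=\beta(\partial z_n)\le\norm{\beta}[\infty]\cdot O(|\chi(S)|)$ gives the contradiction, with no control on $\norm{z_n}[1]$ needed or possible. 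As written, your sketch asks for the wrong estimate and would fail at exactly the step you identify as decisive.

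The forward direction also contains a false ingredient: you invoke ``the known vanishing of the bounded volume class for finite-volume hyperbolic manifolds after Gromov'' to handle the convex core. Gromov--Thurston prove the opposite: for finite-volume hyperbolic $3$-manifolds the simplicial volume equals $\vol(M)/v_3>0$, so the bounded volume class is nontrivial there. The finite volume of the convex core cannot be used this way. The standard and cleaner route for this direction (the one the surrounding literature and this paper take) is: geometric finiteness plus infinite volume implies $h(M)>0$ (Hamenst\"adt), and $h(M)>0$ implies the volume form has a bounded primitive (Sikorav, or Kim--Kim for rank-one symmetric spaces), whence $[\hat{\omega}]=\delta[\hat{\alpha}]=0$ as in \cref{boundedprim}; alternatively, the current-theoretic Hahn--Banach argument of \cref{Teo} applies verbatim. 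I would replace your partition-of-unity/radial-flow construction by one of these.
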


Soma actually proved this result for \textit{topologically tame} hyperbolic $3$-manifolds, that is manifolds which are homeomorphic to the interior of a compact $3$-manifold. By the solution of the tameness conjecture by Agol \cite{agol2004tameness} and Calegari-Gabai \cite{calegari2006shrinkwrapping}, all hyperbolic $3$-manifolds with finitely generated fundamental group are topologically tame.
Geometric finiteness is a property that can be defined for any pinched negatively curved manifold, that is with sectional curvatures which lie between two negative constants (see for example \cite{bowditch1995geometrical}).

From a result of Hamenst\"adt \cite{hamenstadt2004small} it follows that for a pinched negatively curved manifold $M$, being geometrically finite implies that its Cheeger isoperimetric constant $h(M)$ does not vanish, which roughly means that there is a linear isoperimetric inequality for domains in $M$ (see \cref{cheegeriso}).
By a result of Bonahon \cite{bonahon1986bouts}, geometrically infinite hyperbolic 3-manifolds have a simply degenerate end, which implies that $h(M)=0$. Thus, for hyperbolic $3$-manifolds of infinite volume, being geometrically finite is equivalent to having positive Cheeger isoperimetric constant. Therefore, Soma's result can be formulated as follows:

\begin{prop}\label{soma2}
	Let $M$ be a hyperbolic $3$-manifold of infinite volume with finitely generated fundamental group. Then the bounded fundamental class $[\hat{\omega}]\in H^{3}_b(M,\R)$ vanishes if and only if $h(M)>0$.
\end{prop}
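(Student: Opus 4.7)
The proposition is a repackaging of \cref{soma} via the equivalence between geometric finiteness and positivity of the Cheeger constant in the infinite volume, finitely generated setting. My plan is therefore to produce a clean two-step reduction, separating the topological/geometric input from the bounded-cohomological content, and then simply invoke Soma's theorem.

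\textbf{Step 1: reformulate the problem.} By \cref{soma}, the hypothesis on $M$ gives the equivalence $[\hat{\omega}]=0 \iff M \text{ is geometrically finite}$. So it suffices to prove that, under the standing assumptions (infinite volume, finitely generated fundamental group), $M$ is geometrically finite if and only if $h(M)>0$.

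\textbf{Step 2: geometric finiteness implies $h(M)>0$.} This direction does not require finite generation or tameness: I would quote Hamenst\"adt's estimate \cite{hamenstadt2004small}, which gives a positive lower bound for $h$ on any pinched negatively curved geometrically finite manifold, specialised to the hyperbolic case.

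\textbf{Step 3: $h(M)>0$ implies geometric finiteness.} I prove the contrapositive: if $M$ is geometrically infinite, then $h(M)=0$. Since $\pi_1(M)$ is finitely generated, the tameness conjecture (Agol \cite{agol2004tameness}, Calegari--Gabai \cite{calegari2006shrinkwrapping}) gives that $M$ is topologically tame, so Bonahon's description of the ends of topologically tame hyperbolic $3$-manifolds \cite{bonahon1986bouts} applies. Because $M$ has infinite volume and is not geometrically finite, at least one end must be simply degenerate. In a simply degenerate end one can exhibit an exhausting sequence of compact subsurfaces (or more precisely the level sets of the pleated surfaces exiting the end) whose areas grow sublinearly with respect to the volume of the region they bound; this forces the infimum in the definition of $h(M)$ (see \cref{cheegeriso}) to vanish.

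\textbf{Main obstacle.} The two outer steps are direct citations. The substantive content is Step 3, specifically the passage from ``simply degenerate end'' to ``$h(M)=0$''. The difficulty is producing the right F\o lner-type sequence from the pleated surfaces exiting the end: one needs uniform area bounds on these surfaces together with a lower bound, growing to infinity, on the volume of the compact piece they cut off. I expect this to follow from the standard properties of pleated surfaces in tame ends (bounded intrinsic area controlled by the topology of the surface, combined with the fact that the pleated surfaces exit every compact set while homologically bounding larger and larger regions), but writing it out carefully is where the actual work lies.
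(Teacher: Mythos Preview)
Your proposal is correct and follows exactly the route the paper takes: the proposition is not given a formal proof environment but is justified in the surrounding text by reducing to \cref{soma} via the equivalence ``geometrically finite $\iff h(M)>0$'', with Hamenst\"adt cited for one direction and Bonahon's simply-degenerate-end description cited for the other. The paper simply asserts that a simply degenerate end forces $h(M)=0$ without spelling out the F\o lner-type argument you flag as the main obstacle, so your write-up would in fact be more detailed than the paper's.
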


Kim and Kim \cite{kim2015bounded} generalise this result in higher dimensions to locally symmetric spaces of rank one (which include hyperbolic manifolds):

\begin{teo}[{\cite[Theorem 1.2]{kim2015bounded}}]\label{kimkim}
	Let $M$ be an $\R$-rank one locally symmetric space of infinite volume of dimension at least $3$. Then the bounded fundamental class of $M$ vanishes if and only if $h(M)>0$.
\end{teo}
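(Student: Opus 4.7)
The plan is to prove the two implications separately: one by constructing an explicit bounded primitive, the other by contrapositive using Cheeger‑optimal test domains.

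For the direction $h(M)>0 \Rightarrow [\hat\omega]=0$, I would produce a bounded primitive of $\hat\omega$ at the level of cochains. By a theorem of Sikorav type, the positivity of the Cheeger constant on a Riemannian $n$-manifold implies that the volume form $\omega$ admits an $L^\infty$ primitive, that is, a smooth $(n-1)$-form $\alpha$ with $d\alpha=\omega$ and $\norm{\alpha}[\infty]<\infty$. I would then define a cochain on straight $(n-1)$-simplices by $c(\sigma)=\int_{\sigma}\alpha$. By Stokes' theorem, $\delta c=\hat\omega$. The boundedness of $c$ relies on the fact that straight $(n-1)$-simplices in a rank one symmetric space have uniformly bounded volume (Gromov's theorem for $\HH^n$ and its analogues for the other rank one symmetric spaces), so $|c(\sigma)|\le \norm{\alpha}[\infty]\cdot V_{n-1}$ for a uniform constant $V_{n-1}$.

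For the converse $[\hat\omega]=0 \Rightarrow h(M)>0$, I would proceed by contrapositive. Assuming $h(M)=0$, choose a sequence of compact domains $\Omega_k \subset M$ with $\vol(\partial\Omega_k)/\vol(\Omega_k)\to 0$. The idea is to triangulate each $\Omega_k$ by straight simplices and form a relative cycle $z_k$ representing the fundamental class in $H_n(M,\partial\Omega_k)$. On the one hand $\hat\omega(z_k)=\vol(\Omega_k)$; on the other hand, if $\hat\omega=\delta c$ with $c$ bounded, then
\[
|\hat\omega(z_k)| \;=\; |c(\partial z_k)| \;\le\; \norm{c}[\infty]\,\norm{\partial z_k}[1] \;\le\; C\,\norm{c}[\infty]\,\vol(\partial\Omega_k),
\]
and the assumption $h(M)=0$ forces the ratio of the two sides to tend to infinity, giving a contradiction.

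The main obstacle is the triangulation step in the converse direction. In rank one locally symmetric spaces of infinite volume with $h(M)=0$, Cheeger‑optimal domains are geometrically constrained, typically neighbourhoods of degenerate ends or cusp‑like regions, and one must decompose them into straight simplices so that the $\ell^{1}$-norm of the boundary cycle is controlled by $\vol(\partial\Omega_k)$ without blowing up by an unbounded factor. This is where the rank one structure (uniform local geometry, horospherical foliations, and the structure theory of the ends) enters essentially, and it is also what makes the dimension restriction $n\ge 3$ natural: in lower dimension the relevant end structure is too rigid to carry the argument in the same form. The forward direction, by contrast, is more flexible and is the one that the present paper extends beyond the rank one setting.
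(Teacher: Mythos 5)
First, a framing remark: \cref{kimkim} is quoted from Kim--Kim and this paper does not reprove it; the relevant comparison is with the strategy the paper records (existence of a bounded primitive of $\omega$, \cref{boundedprim} and \cref{primitive-cheeger}). Your forward direction ($h(M)>0\Rightarrow[\hat\omega]=0$) is essentially that route and is correct in structure, with one attribution error that matters: Sikorav's theorem requires \emph{bounded geometry}, and an infinite volume rank one locally symmetric space need not have positive injectivity radius (it can have arbitrarily thin parts and cusps), so you cannot invoke ``a theorem of Sikorav type'' here. The existence of a bounded primitive under $h(M)>0$ for locally symmetric spaces is a separate theorem of Kim--Kim (their Theorem 1.1), proved using the symmetric space structure; with that citation the rest of your argument (integrate $\alpha$ over straight $(n-1)$-simplices, use Stokes and the uniform volume bound for straight $k$-simplices with $k\ge2$) is exactly \cref{boundedprim}.

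The converse direction has a genuine gap, and it sits precisely at the step you flag as ``the main obstacle'': that step \emph{is} the implication. To get $\norm{\partial z_k}[1]\le C\,\vol(\partial\Omega_k)$ you must triangulate $\partial\Omega_k$ by straight simplices whose \emph{number} is at most a constant times its $(n-1)$-volume, which requires a uniform positive lower bound on the volume of the simplices you use. Straight simplices in negative curvature have uniformly bounded volume from above but can be arbitrarily degenerate, and the almost-isoperimetric domains $\Omega_k$ witnessing $h(M)=0$ typically escape into regions of arbitrarily small injectivity radius, so no such lower bound is available; without it the displayed chain of inequalities does not produce a contradiction. Moreover, this is not how Kim--Kim argue: their proof of $[\hat\omega]=0\Rightarrow h(M)>0$ upgrades the bounded singular primitive of $\hat\omega$ to a genuine bounded differential form $\alpha$ with $d\alpha=\omega$ (using the homogeneity of the symmetric space), after which $h(M)>0$ follows from Stokes' theorem on domains as in \cref{primitive-cheeger}; no triangulation of near-optimal domains is needed. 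Finally, your explanation of the hypothesis $n\ge3$ is not the right one: it is needed so that $n-1\ge2$ and straight $(n-1)$-simplices have uniformly bounded volume (geodesic segments do not), which is what makes the primitive cochain bounded.
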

Moreover, by extending Soma's results, they prove the same for more general negatively curved $3$-manifolds, under the hypotheses of bounded geometry and pinched negative curvature (see \cref{pnc} and \cref{bounded_geo}):
\begin{teo}[{\cite[Theorem 1.4]{kim2015bounded}}]\label{kimkim2}
	Let $M$ be an infinite volume, pinched negatively curved $3$-manifold with bounded geometry. Then the bounded fundamental class $[\hat{\omega}]\in H^{3}_b(M,\R)$ vanishes if and only if $h(M)>0$.
\end{teo}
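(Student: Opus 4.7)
Two implications are needed, and both hinge on the same duality, going back to Sullivan and Gromov, between the isoperimetric behaviour of $M$ and the existence of $L^{\infty}$ primitives for the volume form $\omega$.

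\textbf{From $h(M)>0$ to $[\hat{\omega}]=0$.} First I would use the positive Cheeger constant to produce a bounded primitive of $\omega$: a standard Hahn--Banach argument on $L^{1}$-currents (in the form used by Sullivan) exhibits a bounded measurable $2$-form $\alpha$ on $M$ with $d\alpha=\omega$ in the distributional sense, the continuity of the relevant linear functional on boundaries being exactly the content of $h(M)>0$. Define the straight bounded $2$-cochain $\beta(\tau):=\int_{\tau}\alpha$. Because straight $2$-simplices in a pinched negatively curved simply connected space have uniformly bounded area (by a standard curvature-comparison estimate), $\beta$ is bounded, and Stokes' theorem identifies $\delta\beta$ with the straight volume cocycle; hence $[\hat{\omega}]=0$.

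\textbf{From $[\hat{\omega}]=0$ to $h(M)>0$.} I would argue by contraposition. Assume $h(M)=0$ and pick precompact domains $D_{k}\subset M$ with $\mathrm{area}(\partial D_{k})/\mathrm{vol}(D_{k})\to 0$. Using bounded geometry (uniform lower bound on the injectivity radius and two-sided curvature bound), I would triangulate each $D_{k}$ by straight geodesic tetrahedra of diameter bounded by a fixed $r_{0}$ and volume bounded below, arranged so that the total number of top simplices is $\asymp \mathrm{vol}(D_{k})$ and the number of $2$-faces on $\partial D_{k}$ is $\asymp \mathrm{area}(\partial D_{k})$. If $\beta$ were a bounded straight $2$-cochain with $\delta\beta=c_{\omega}$, then summing over the triangulation and cancelling interior faces in pairs would give
\[
\mathrm{vol}(D_{k})=\sum_{\sigma}\delta\beta(\sigma)\leq \|\beta\|_{\infty}\cdot\#\{\text{boundary }2\text{-faces}\}\lesssim \|\beta\|_{\infty}\cdot \mathrm{area}(\partial D_{k}),
\]
which contradicts $\mathrm{vol}(D_{k})/\mathrm{area}(\partial D_{k})\to\infty$.

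\textbf{Expected main obstacle.} The sensitive point is producing, for each $k$, a triangulation whose boundary $2$-face count is genuinely comparable to $\mathrm{area}(\partial D_{k})$: this is not automatic from the mere existence of Cheeger-testing domains, since $\partial D_{k}$ need not be geometrically regular. The dimension-three hypothesis enters precisely here: I would first replace $D_{k}$ by the region enclosed by an almost-area-minimising smooth surface with no worse perimeter-to-volume ratio, using the regularity theory of isoperimetric surfaces in $3$-manifolds, and only then triangulate by straight simplices adapted to this surface, invoking bounded geometry to control simplex counts on both the interior and the boundary. Extending this triangulation step to ambient dimensions in which isoperimetric minimisers can be singular, or to the merely strictly (rather than pinched) negatively curved setting, is exactly where the paper's reliance on currents enters in the announced generalisation.
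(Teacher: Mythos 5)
Both directions of your plan contain a gap at the crucial step, even though the overall architecture (isoperimetry $\leftrightarrow$ bounded primitives, via Hahn--Banach) is the right one. For the direction $h(M)>0\Rightarrow[\hat{\omega}]=0$: a Hahn--Banach argument on currents produces, at best, an element of the dual of a space of finite-mass currents, i.e.\ a bounded \emph{measurable} $2$-form $\alpha$ with $d\alpha=\omega$ distributionally. Such an $\alpha$ is defined only up to sets of $3$-dimensional measure zero, so the cochain $\beta(\tau)=\int_\tau\alpha$ is simply not defined: a $2$-simplex is a null set. Upgrading the distributional primitive to an honest (smooth, or at least restrictable) bounded form is precisely the content of Sullivan's question, which the paper records as open in general (\cref{inverse_bounded}); it is resolved under bounded geometry by Sikorav's construction (\cref{boundedprim2}), which uses a bounded-geometry triangulation and a smoothing operator $P^{*}$ to produce a genuinely smooth bounded primitive. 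You should invoke that (as the paper does via \cref{onedirection} and \cref{boundedprim}), or else bypass forms entirely as in the paper's Section 4, where Hahn--Banach is applied directly to the singular cochain $\psi(\partial c)=\hat{\omega}(c)$, with boundedness on boundaries supplied by the mass inequality $\mathrm{M}(c)\le C\,\mathrm{M}(\partial c)$ proved through the BV Poincar\'e inequality.

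For the converse direction, your contraposition hinges on triangulating each Cheeger-testing domain $D_k$ so that the number of boundary $2$-faces is comparable to $\mathrm{area}(\partial D_k)$. You correctly flag this as the main obstacle, but the proposed repair --- replacing $D_k$ by an almost-area-minimising domain and invoking regularity of isoperimetric surfaces --- is not carried out and is genuinely delicate: isoperimetric minimisers need not exist in a non-compact manifold, and even a smooth boundary of controlled total area gives no uniform local regularity (e.g.\ bounded second fundamental form, or a lower bound on the size of boundary pieces) of the kind needed to bound the face count by the area. The paper avoids this entirely: it fixes \emph{one} bounded-geometry triangulation $K$ of all of $M$, restricts the bounded singular primitive $\phi$ of $\hat{\omega}$ to $K$, smooths it to a bounded differential form $P^{n-1}\phi$, corrects by a bounded exact form $\eta$ so that $\omega=d(P^{n-1}\phi+\eta)$, and then obtains $h(M)>0$ from Stokes' theorem on arbitrary smooth domains (\cref{primitive-cheeger}). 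No per-domain triangulation, and hence no boundary-counting problem, ever arises. If you want to keep your combinatorial scheme you would need to prove a uniform ``regularisation of Cheeger domains'' statement; as written, the step $\#\{\text{boundary }2\text{-faces}\}\lesssim\mathrm{area}(\partial D_k)$ is unjustified.
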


Having these results in mind, Kim and Kim formulate the following conjecture:
\begin{conj}[\cite{kim2015bounded}]\label{conjecture}
	Let $M$ be an infinite volume, pinched negatively curved manifold of dimension $n$. Then the bounded fundamental class $[\hat{\omega}]\in H^{n}_b(M,\R)$ vanishes if and only if $h(M)>0$.
\end{conj}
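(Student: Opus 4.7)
My plan hinges on a classical theorem of Sullivan (and Gromov), which asserts that on a complete oriented Riemannian $n$-manifold $M$, the Cheeger constant $h(M)$ is positive if and only if the volume form $\omega$ admits a bounded primitive, meaning a form $\alpha \in L^\infty\Omega^{n-1}(M)$ with $d\alpha = \omega$. With this in hand, the conjecture reduces to translating between bounded primitives at the differential-form level and bounded primitives of the straight cocycle $\hat{\omega}$ at the simplicial level.

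For the implication $h(M) > 0 \Rightarrow [\hat{\omega}] = 0$, which the abstract asserts in full generality, the strategy is direct. Apply Sullivan's theorem to obtain an $L^\infty$ form $\alpha$ with $d\alpha = \omega$, and define the $(n-1)$-cochain $\hat{\alpha}$ by integrating $\alpha$ over straight $(n-1)$-simplices. In strict negative curvature, straight simplices of dimension $n-1$ and $n$ have uniformly bounded volume (a Gromov-type simplex bound), so $\hat{\alpha}$ is a bounded cochain, and simplex-by-simplex Stokes' theorem gives $\delta \hat{\alpha} = \hat{\omega}$. This shows $[\hat{\omega}] = 0$, and crucially needs no bounded geometry assumption.

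For the converse $[\hat{\omega}] = 0 \Rightarrow h(M) > 0$, the idea is to reverse the construction: extract from a bounded primitive $\hat{\beta}$ of $\hat{\omega}$ a bounded differential form $\alpha$ with $d\alpha = \omega$, then invoke Sullivan again. Currents provide the natural bridge. One would pair $\hat{\beta}$ with smooth test $(n-1)$-forms of compact support, via a smoothing operator built from a partition of unity adapted to a uniform net of straight simplices, to obtain a current. Under bounded geometry, the smoothing kernel has uniform bounds (one has positive injectivity radius and two-sided curvature bounds), so the mass of this current is controlled, and regularity theory identifies it with a bounded form $\alpha$. The identity $d\alpha = \omega$ should follow from $\delta\hat{\beta} = \hat{\omega}$ via the compatibility of the smoothing with $d$ and $\delta$.

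The main obstacle is this converse direction, specifically the smoothing step. Straight simplices are globally rigid and do not localise well, so passing from a uniform bound on a straight cochain to a pointwise $L^\infty$ bound on a form is delicate: one must balance the scale of the net and the smoothing, verify that the discrepancy between $\hat{\beta}$ and the regularised form is a bounded coboundary, and check that the two objects represent compatible classes in the pairing with currents. Bounded geometry is what makes these estimates uniform and so should be essential for the argument to close; removing it to obtain the conjecture in full generality appears to require genuinely new ideas, which explains why the abstract claims only a partial answer in that case.
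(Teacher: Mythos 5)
The central gap is your appeal to ``a classical theorem of Sullivan (and Gromov)'' asserting that $h(M)>0$ is equivalent to the existence of a bounded primitive of the volume form. Only the easy direction of that equivalence (bounded primitive $\Rightarrow$ $h(M)>0$, via Stokes) is a theorem; the other direction is precisely Sullivan's \emph{question}, which Gromov asserted without a detailed proof and which remains open in full generality. It is known only for locally symmetric spaces and for manifolds of bounded geometry (Sikorav, Block--Weinberger). So your argument for the implication $h(M)>0 \Rightarrow [\hat{\omega}]=0$ without bounded geometry rests on an unproven statement, and this is exactly the point where the paper takes a different route: instead of producing a bounded primitive \emph{differential form}, it shows that the domain isoperimetric inequality implied by $h(M)>0$ yields a mass isoperimetric inequality $\mathrm{M}(c)\le C\,\mathrm{M}(\partial c)$ for straight singular $n$-chains --- by representing the associated normal $n$-currents by $\mathrm{BV}$ functions and invoking the $L^1$ Poincar\'e inequality coming from the coarea formula --- and then applies Hahn--Banach to extend $\partial c\mapsto\hat{\omega}(c)$ to a bounded cochain primitive of $\hat{\omega}$ directly. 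This bypasses Sullivan's question entirely.

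Two further remarks. First, under bounded geometry your forward direction is fine, since there Sikorav's theorem does supply the bounded primitive form, and integrating it over straight simplices gives a bounded cochain primitive (this is the paper's Proposition on bounded primitives). Second, for the converse $[\hat{\omega}]=0 \Rightarrow h(M)>0$ under bounded geometry, your smoothing idea is in the right spirit but, as you yourself note, smoothing a cochain defined on \emph{straight} simplices is delicate; the paper sidesteps this by restricting the bounded singular primitive $\phi$ of $\hat{\omega}$ to a triangulation $K$ of bounded geometry and feeding it into Sikorav's Whitney-type operator $P^{n-1}$, which carries bounded simplicial cochains to bounded differential forms, followed by a norm-controlled de Rham correction term. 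Finally, note that neither your proposal nor the paper proves the conjecture as stated: the full statement (both directions, no bounded geometry, only pinched negative curvature) remains open, and your own text concedes that the converse direction needs bounded geometry.
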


As said before, geometrically finite, pinched negatively curved manifolds of infinite volume have positive Cheeger isoperimetric constant, and thus they provide a class of manifolds on which to test one of the directions of the conjecture. In this spirit, Kim and Kim obtain the following:
\begin{teo}\label{geom_fin}
	Let $M$ be an infinite volume, pinched negatively curved manifold of dimension $n$ at least $3$. If $M$ is geometrically finite, then its bounded fundamental class $[\hat{\omega}]\in H^{n}_b(M,\R)$ vanishes.
\end{teo}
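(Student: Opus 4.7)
The plan is to reduce to the Cheeger constant case and then produce an explicit bounded primitive. By Hamenst\"adt \cite{hamenstadt2004small}, geometric finiteness implies $h(M) > 0$ in pinched negative curvature, so it suffices to prove that $h(M) > 0$ forces $[\hat{\omega}] = 0$. This reduction is attractive because it converts a quasi-isometric/topological hypothesis into the analytic hypothesis that already features in \cref{conjecture}, and moreover it is exactly the direction of the conjecture being pursued in this paper.

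Assuming $h(M) > 0$, the next step is to produce a smooth $(n-1)$-form $\alpha$ on $M$ with $d\alpha = \omega$ and $\norm{\alpha}[\infty] < \infty$. The construction proceeds via a Sullivan--Gromov-style duality: the linear isoperimetric inequality $\vol(K) \leq h(M)^{-1}\vol(\partial K)$ implies, through a Hahn--Banach/``sweep-out'' argument, that $\omega$ admits a bounded current primitive $T$, which one then regularises to a smooth bounded form $\alpha$ with $\norm{\alpha}[\infty] \leq C(n)/h(M)$.

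Given such $\alpha$, define the bounded cochain $\hat{\alpha}(\sigma) := \int_{\sigma} \alpha$ on straight $(n-1)$-simplices $\sigma$. Boundedness uses the key fact that in pinched negative curvature geodesic simplices have uniformly bounded volume, giving $|\hat{\alpha}(\sigma)| \leq V_n \norm{\alpha}[\infty]$; Stokes' theorem then yields $\delta\hat{\alpha} = \hat{\omega}$, so $[\hat{\omega}] = 0$ in $H^n_b(M,\R)$.

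The main obstacle is the second step: obtaining a \emph{smooth} bounded primitive from positivity of $h(M)$. The duality readily yields a bounded primitive in the category of currents, but regularising to a smooth form while preserving the $L^\infty$ bound requires a mollification adapted to the geometry of $M$---bounded geometry on a thick part, together with explicit constructions in the cusps and funnel ends provided by the geometrically finite structure. A possible alternative is to bypass smoothing entirely by pairing a bounded current primitive directly with straight simplices on the universal cover via a straightening/transfer operator, at the cost of verifying that the resulting cochain is genuinely bounded and that its coboundary is exactly $\hat{\omega}$.
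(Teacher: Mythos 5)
Your first step---reducing geometric finiteness to $h(M)>0$ via Hamenst\"adt---is exactly the reduction this paper makes in the introduction, and the statement is indeed subsumed by \cref{teo2}. The problem is your second step. Deducing from $h(M)>0$ alone that $\omega$ admits a \emph{smooth bounded} primitive is precisely Sullivan's question, which the paper explicitly records as open in general (see \cref{inverse_bounded}): it is known only for locally symmetric spaces and for manifolds of bounded geometry. A geometrically finite PNC manifold of infinite volume need not fall into either case---in particular its injectivity radius degenerates in the cusps, so bounded geometry fails there and the mollification you invoke has no uniform $L^\infty$ control. You correctly identify this as ``the main obstacle,'' but you do not resolve it: the promised ``explicit constructions in the cusps and funnel ends'' are exactly the missing content of the proof, so as written the argument has a genuine gap at its central step.

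The paper's actual route to the generalisation (\cref{Teo}) avoids constructing any primitive of $\omega$, smooth or current-valued. From $h(M)>0$ one extracts, via the coarea formula, an $L^1$ Poincar\'e inequality $\norm{h}[L^1]\le C\norm{\nabla h}[L^1]$; a normal top-dimensional current $T_c$ associated to a straight chain is represented by a BV function $f$ with $\operatorname{M}(c)=\norm{f}[L^1]$ and $\operatorname{M}(\partial c)=|Df|(M)$, whence $\operatorname{M}(c)\le C\operatorname{M}(\partial c)$; and then Hahn--Banach is applied directly to the functional $\psi(\partial c)=\hat{\omega}(c)$ on boundaries, using $|\hat{\omega}(c)|\le\operatorname{M}(c)\le C\operatorname{M}(\partial c)\le Cv_{n-1}\norm{\partial c}[1]$. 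Your closing ``possible alternative''---pairing a bounded current primitive with straight simplices---gestures toward this, but the key insight you are missing is that one should bound $\hat{\omega}$ on boundaries of straight chains via a mass isoperimetric inequality rather than manufacture a primitive object at all. If you want to salvage your approach as stated, you would need to either prove Sullivan's conjecture for geometrically finite PNC manifolds (handling the cusps explicitly, which is essentially what Kim--Kim do) or switch to the current-theoretic argument above.
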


\subsection*{Statement of the results}
The first main result of this paper generalises \cref{kimkim2} to all dimensions greater than $3$ and to manifolds with negative curvature bounded away from $0$, not necessarily pinched. It thus provides a positive answer to the conjecture for this class of manifolds:

\begin{mainteo}\label{teo1}
	Let $M$ be an infinite volume Riemannian manifold of dimension at least $3$ with negative curvature bounded away from $0$ and with bounded geometry. Then the bounded fundamental class of $M$ vanishes if and only if $h(M)>0$.
\end{mainteo}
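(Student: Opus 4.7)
The plan is to reduce both implications to the existence of a bounded differential primitive for the volume form. By a classical characterization going back to Gromov and Sullivan, on a Riemannian manifold $M$ of bounded geometry one has $h(M)>0$ if and only if there exists an $L^\infty$-bounded $(n-1)$-form $\alpha$ with $d\alpha=\omega$. Thus the theorem reduces to the equivalence
\[
[\hat{\omega}]=0 \quad\Longleftrightarrow\quad \text{there exists a bounded } \alpha\in\Omega^{n-1}(M) \text{ with } d\alpha=\omega.
\]

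The direction $(\Leftarrow)$ is essentially the content of Theorem B and does not require bounded geometry: integration of a bounded primitive form $\alpha$ along straight $(n-1)$-simplices produces a bounded $(n-1)$-cochain whose coboundary equals $\hat{\omega}$ by Stokes' theorem applied to straight simplices. The substantial content is the direction $(\Rightarrow)$: starting from a bounded $(n-1)$-cochain $\beta$ on straight singular simplices with $\delta\beta=\hat{\omega}$, one must produce a bounded $(n-1)$-form $\alpha$ with $d\alpha=\omega$. This is a bounded de Rham-type statement and is precisely where the bounded geometry hypothesis is decisive.

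To build $\alpha$, the plan is the following. The lower bound $r_0>0$ on the injectivity radius and the two-sided curvature bounds from bounded geometry yield a uniformly locally finite cover of $M$ by balls $B(x_i,r_0)$ with a partition of unity $\{\chi_i\}$ whose $C^1$-norms are uniformly bounded. In each ball, use the exponential map to parameterize families of small straight $(n-1)$-simplices based at a moving point $x$, and define a local $(n-1)$-form $\alpha_i$ whose infinitesimal evaluation on these simplices reproduces $\beta$. The cocycle identity $\delta\beta=\hat{\omega}$ then forces $d\alpha_i=\omega$ up to error terms controlled by $\|\beta\|_\infty$ and local geometric quantities. Gluing by $\alpha=\sum_i\chi_i\alpha_i$ and absorbing the discrepancies into a bounded correction term yields a global bounded primitive. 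The hypothesis $\dim M\geq 3$ enters both through the use of straight top-dimensional simplices and through the control one has over the straightening of $(n-1)$-faces (the degenerate two-dimensional situation fails).

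The main obstacle I expect is precisely this cochain-to-form passage: converting a bounded singular primitive of $\hat{\omega}$ into a bounded smooth primitive of $\omega$ requires careful uniform averaging, and every estimate must survive the gluing. It is natural to phrase this step in the language of currents (as suggested by the paper's keywords), replacing forms by bounded $(n-1)$-currents so that $\beta$ is interpreted as a current of finite mass per unit ball, and then using bounded geometry to regularise into a genuine bounded form. Once $\alpha$ is obtained, the isoperimetric inequality follows at once: for any compact domain $\Omega\subset M$ with smooth boundary,
\[
\vol(\Omega)=\int_\Omega d\alpha=\int_{\partial\Omega}\alpha\leq\|\alpha\|_\infty\cdot\vol(\partial\Omega),
\]
so that $h(M)\geq 1/\|\alpha\|_\infty>0$, completing the argument.
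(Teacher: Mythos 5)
Your overall architecture matches the paper's: both reduce the theorem to the equivalence between the vanishing of $[\hat{\omega}]$ and the existence of a bounded differential primitive of $\omega$, using the Gromov--Sullivan characterization (proved under bounded geometry by Sikorav and Block--Weinberger) to pass between bounded primitives and $h(M)>0$, and you correctly identify the cochain-to-form passage as the only substantial new step. However, your execution of that step has a genuine gap. A bounded singular cochain $\beta$ with $\delta\beta=\hat{\omega}$ is merely a bounded function on the set of singular simplices; it carries no continuity whatsoever as the simplex varies. Consequently the ``infinitesimal evaluation of $\beta$ on small straight $(n-1)$-simplices based at a moving point'' need not converge, and the local forms $\alpha_i$ you propose are simply not defined. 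The subsequent claim that the discrepancies can be ``absorbed into a bounded correction term'' is also circular as stated: the discrepancy is an exact bounded $n$-form, and finding a bounded primitive of a bounded exact $n$-form is precisely the problem you set out to solve, so some additional structure on the discrepancy is indispensable.

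The paper circumvents both issues by discretizing rather than differentiating: it fixes a triangulation $K$ of bounded geometry, restricts the bounded singular primitive $\phi$ of $\hat{\omega}$ to the simplices of $K$ to obtain a bounded \emph{simplicial} cochain, and applies Sikorav's smoothing operator $P^{*}$, which sends bounded simplicial cochains to bounded forms and satisfies $dP^{n-1}(\phi)=P^{n}I^{n}(\omega)$. The leftover $n$-form $\omega-P^{n}I^{n}(\omega)$ is not merely bounded: it integrates to zero on every $n$-simplex of $K$, and it is this stronger property (together with the bounded geometry of $K$) that lets a norm-controlled constructive de Rham argument produce a bounded primitive $\eta$ of the discrepancy, whence $\omega=d(P^{n-1}\phi+\eta)$. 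To salvage your plan, replace the ``infinitesimal evaluation'' by this restriction-to-a-triangulation step; the partition-of-unity gluing you describe is then essentially the Whitney map hidden inside $P^{*}$, and the remaining direction of your argument (bounded primitive $\Rightarrow$ $[\hat{\omega}]=0$, and the Stokes computation giving $h(M)\ge \|\alpha\|_{\infty}^{-1}$) is correct as written.
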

Moreover, we show that one of the two implications of the conjecture holds even when the assumption of bounded geometry is dropped, and thus is true for all manifolds with negative curvature bounded away from $0$, generalising \cref{geom_fin}.

\begin{mainteo}\label{teo2}
	Let $M$ be an infinite volume Riemannian manifold of dimension at least 3 with negative curvature bounded away from $0$. If $h(M)>0$ then the bounded fundamental class of $M$ vanishes.
\end{mainteo}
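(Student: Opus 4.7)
The strategy is to exhibit the bounded fundamental class as the coboundary of a bounded $(n-1)$-cochain constructed by integrating a bounded primitive of the volume form over straight simplices. Two ingredients combine: a classical characterisation of the positivity of the Cheeger constant in terms of bounded primitives of $\omega$, and a curvature-based uniform bound on the volumes of geodesic simplices of dimension at least two.

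The first step is to invoke the classical equivalence due to Sullivan and Gromov: on any complete Riemannian manifold, $h(M)>0$ is equivalent to the existence of a primitive $(n-1)$-form $\alpha$ satisfying $d\alpha = \omega$ and $\|\alpha\|_\infty < \infty$. The proof is via Hahn-Banach applied to the space of normal currents, and after mollification one may take $\alpha$ smooth. Pulling back to the universal cover $\tilde M$ gives a $\pi_1(M)$-invariant bounded primitive $\tilde\alpha$ of $\tilde\omega$. Define the cochain $\beta$ on singular $(n-1)$-simplices of $\tilde M$ by $\beta(\sigma) := \int_{\mathrm{str}(\sigma)} \tilde\alpha$, where $\mathrm{str}(\sigma)$ denotes the geodesic straightening. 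Since straightening commutes with the face maps, Stokes' theorem yields
\[
\delta\beta(\tau) \;=\; \beta(\partial\tau) \;=\; \int_{\partial\,\mathrm{str}(\tau)} \tilde\alpha \;=\; \int_{\mathrm{str}(\tau)} \tilde\omega \;=\; \hat\omega(\tau),
\]
and $\beta$ descends to $M$ by the $\pi_1(M)$-equivariance of all the ingredients.

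The crucial check is the boundedness of $\beta$. Pointwise one has $|\beta(\sigma)| \leq \|\tilde\alpha\|_\infty \cdot \vol_{n-1}(\mathrm{str}(\sigma)) \leq \|\alpha\|_\infty \cdot V_{n-1}(a)$, where $V_{n-1}(a)$ is a uniform upper bound on the $(n-1)$-volume of geodesic $(n-1)$-simplices in any Hadamard manifold with sectional curvature $\leq -a^2 < 0$. Such a bound exists in every dimension $\geq 2$ by a standard CAT($-a^2$) comparison with the model hyperbolic space, and this is exactly where the assumption $\dim M \geq 3$ enters. Hence $\beta$ defines a bounded cochain with $\delta\beta = \hat\omega$, proving $[\hat\omega] = 0$ in $H_b^n(M,\R)$.

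The main obstacle lies in executing the first step without any bounded geometry hypothesis. The Hahn-Banach construction a priori produces $\alpha$ only as an $L^\infty$ form, and integrating such a form over a measure-zero straight simplex is not automatically well-defined. Two routes seem viable: mollify $\alpha$ into a smooth bounded primitive and check that the resulting straight cochains converge to a bounded cochain with the correct coboundary, or work throughout in the framework of currents—viewing each straight simplex as a normal current of uniformly bounded mass and $\alpha$ as a bounded linear functional on normal currents, thereby bypassing any pointwise restriction issues. The latter route is consistent with the role of currents announced among the paper's keywords.
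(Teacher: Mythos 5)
There is a genuine gap, and it sits exactly where you place your first step. You invoke as ``the classical equivalence due to Sullivan and Gromov'' the statement that $h(M)>0$ implies the existence of a bounded primitive $(n-1)$-form $\alpha$ with $d\alpha=\omega$. This implication is precisely the open problem the theorem is designed to circumvent: Sullivan asked it, Gromov asserted it without a detailed proof, and it is currently known only when $M$ has bounded geometry (Sikorav, Block--Weinberger) or is locally symmetric (Kim--Kim). Sikorav's argument needs bounded geometry in an essential way, because the Hahn--Banach step produces only a bounded \emph{simplicial cochain}, and the smoothing operator that converts it into a bounded \emph{differential form} requires a triangulation with uniformly controlled geometry. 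Without bounded geometry there is no known way to perform this upgrade, so your route collapses at step one. You do flag this as ``the main obstacle,'' but neither of your proposed fixes resolves it: mollification presupposes you already have a bounded $L^\infty$ primitive, and ``viewing $\alpha$ as a functional on normal currents'' still presupposes $\alpha$ exists. The remainder of your argument (integrating $\alpha$ over straight simplices, using the uniform volume bound $V_{n-1}$ in dimension $\geq 2$, hence the hypothesis $n\geq 3$) is correct but is exactly the content of \cref{boundedprim}, which is the easy part.

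The actual proof never constructs a bounded primitive form. Instead it produces a bounded primitive \emph{singular cochain} directly. From $h(M)>0$ and the coarea formula one gets an $L^1$ Poincar\'e inequality $\norm{h}[L^1]\le C\norm{\nabla h}[L^1]$, which extends to BV functions. A straight top-dimensional chain $c$ (for $n\ge 3$) defines a normal current $T_c$, which by Riesz representation is given by a BV function $f$ with $\operatorname{M}(c)=\norm{f}[L^1]$ and $\operatorname{M}(\partial c)=|Df|(M)$; the Poincar\'e inequality then yields the mass isoperimetric inequality $\operatorname{M}(c)\le C\operatorname{M}(\partial c)$. One then sets $\psi(\partial c):=\hat\omega(c)$ on boundaries (well defined because $\omega$ is exact, with no boundedness needed for that primitive), bounds $|\psi(\partial c)|\le \operatorname{M}(c)\le C\operatorname{M}(\partial c)\le Cv_{n-1}\norm{\partial c}[1]$ using the uniform volume bound on straight $(n-1)$-simplices, and extends by Hahn--Banach to a bounded cochain with $\delta\psi=\hat\omega$. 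This is the idea your proposal is missing: Hahn--Banach is applied in the singular chain complex, where boundedness of the extension is all that is required, rather than in the de Rham complex, where one would need to solve the open Sullivan--Gromov problem first.
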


In order to prove these results, we use techniques which are quite different from those of Soma and Kim and Kim. Indeed, their results apply to either symmetric spaces of any dimension or to $3$-manifolds with bounded geometry. For the first case, Kim and Kim use tools only available for locally symmetric spaces, while for the second case, the precise behaviour of $3$-manifolds at infinity is used (which relies on some highly non-trivial theorems). Instead, we use tools available in every dimension which do not need the knowledge of the geometry at infinity and also do not require that the manifold is a locally symmetric space.
For the first result, we exploit the construction of a bounded primitive of the volume form given by Sikorav \cite[Theorem 1.1]{sikorav2001growth}. For the second result, using the theory of currents on manifolds, we prove that a linear isoperimetric inequality for relatively compact domains of $M$ (which is equivalent to $h(M)>0$) implies a similar inequality for masses of straight singular chains which in turn implies that the bounded fundamental class vanishes.

\subsection*{Bounded primitives of the volume form}
The positivity of Cheeger's isoperimetric constant is linked to another property of the manifold, namely the existence of a bounded primitive of the volume form. A differential form is said to be bounded if it attains uniformly bounded values on orthonormal frames (see \cref{formsnorm}). If the volume form of a Riemannian manifold admits a bounded primitive, then it can be easily seen that the Cheeger isoperimetric constant must be positive (see \cref{primitive-cheeger}). Sullivan \cite{sullivan1976cycles} asked whether the converse is also true and Gromov \cite{gromov1981hyperbolic} asserted that the answer to Sullivan's question should be positive, without giving a detailed proof. As far as we know, the question is still open in full generality, and has been solved in two cases, namely when $M$ is a locally symmetric space \cite[Theorem 1.1]{kim2015bounded} or when $M$ has bounded geometry \cite[Theorem 1.1]{sikorav2001growth}, \cite[Remark in Section 3]{block1992aperiodic}. If the volume form $\omega$ admits a primitive $\alpha$, then the corresponding (bounded) cocycle $\hat{\omega}$, defined by integration, has the cocycle $\hat{\alpha}$ (also defined by integration) as a primitive. If $\alpha$ is bounded then $\hat{\alpha}$ also is (as a cocycle) and thus $[\hat{\omega}]=0$ in $H_n^b(M,\R)$ (see also \cref{boundedprim}). Thus, if $M$ has bounded geometry or is a locally symmetric space, one direction of \cref{conjecture} is true, and indeed this is the approach that Kim and Kim \cite{kim2015bounded} use to prove this implication. On the other hand, if $M$ is a locally symmetric space of real rank $1$ (for example a hyperbolic manifold), \cref{kimkim} implies that the vanishing of the bounded fundamental class is enough to ensure that $\omega$ has a bounded primitive $\alpha$. In some sense, the bounded cocycle $\phi$ can be upgraded to an actual differential form. This is the strategy with which Kim and Kim prove the conjecture for this class of manifolds. The structure of our argument for the proof of \cref{teo1} is similar (though the technique, as said above, is different), showing that bounded cocycles can be upgraded to bounded differential forms when the manifold has bounded geometry. We do not know whether this happens without the bounded geometry assumption, but an affirmative answer would solve the missing direction of \cref{conjecture} in full generality.

\subsection*{Plan of the paper}
In \cref{preliminari} we introduce some preliminary concepts which will be needed in the paper. In \cref{bounded_geo_case} we prove \cref{teo1} solving the conjecture in the case with bounded geometry, and in \cref{general_case} we prove \cref{teo2} in the general case.

\subsection*{Acknowledgement}
I would like to thank my advisor Roberto Frigerio for suggesting this problem to me and for reading the preliminary version of this paper. I would also like to thank Luigi Ambrosio and Francesco Milizia for useful conversations and for pointing out some references. Furthermore, I would  like to thank the anonymous referee for the careful reading of the paper and the useful comments. Finally, this work has been written within the activities of the GNSAGA group of INdAM (National Institute of Higher Mathematics).

\section{Preliminaries}\label{preliminari}
In this section we recall some preliminary notions and set the notation.
\subsection{Bounded cohomology.}
Bounded cohomology is a functional-analytic variant of singular cohomology, which was first introduced by Johnson \cite{johnson1972cohomology} and Trauber in the context of Banach algebras. It was then defined for topological spaces by Gromov in his seminal article \cite{gromov1982volume}. Let $X$ be a topological space. We denote by $C_k(X,\R)$ the set of singular chains with real coefficients. There is a natural norm that can be defined on this space, which is the $\ell^1$-norm corresponding to the basis given by singular simplices.
\begin{defn}
	Let $c=\sum_{i=1}^{n}a_i\sigma_i$ be a reduced singular chain (i.e., without cancellations among the simplices). We define 
	\[\norm{c}[1]=\norm{\sum_{i=1}^{n}a_i\sigma_i}[1]\coloneqq \sum_{i=1}^{n}|a_i|.\]
\end{defn}
This norm turns $C_k(X,\R)$ into a normed vector space. When defining singular cohomology, one takes the algebraic dual of this vector space. To define bounded cohomology, one takes instead the topological dual of $(C_k(X,\R),\norm{\cdot}[1])$, i.e., the space of \textit{bounded} cochains $C_b^k(X,\R)$. More explicitly, we have a norm on $C^k(X,\R)$ given by
\[\norm{\phi}[\infty]\coloneqq\sup_{\norm{c}[1]\le 1}|\phi(c)|\in [0,+\infty],\]
and $C_b^k(X,\R)$ is the subspace of $C^k(X,\R)$ consisting of cochains of finite norm.
The usual differential of cochains restricts to bounded ones and thus one gets a cochain complex $C_b^*(X,\R)$.
\begin{defn}
	The bounded cohomology of $X$, denoted by $H_b^*(X,\R)$, is the cohomology of the cochain complex $C_b^*(X,\R)$.
\end{defn}
Some properties of classical cohomology are still valid in this context (such as invariance up to homotopy), but others are not. The most striking one is the absence of excision, which makes bounded cohomology behave much differently (see \cite{frigerio2017bounded} for an introduction on the subject). Trying to compute bounded cohomology groups is not a trivial task. Even to show whether they vanish or not is a difficult problem in general, and thus it is interesting to try to build non-trivial classes.

\subsection{Bounded volume class}\label{bounded_volume_class}
\begin{defn}\label{pnc}
	Let $M$ be a complete Riemannian manifold. We say that $M$ has \textit{negative curvature bounded away from $0$} if there exists $\epsilon>0$ such that $k\le-\epsilon$ for all sectional curvatures $k$. If in addition there exists $c>0$ such that $-c<k<-\epsilon$, then we say that $M$ is \emph{pinched negatively curved} (PNC).
\end{defn}
Let now $M$ be an $n$-dimensional negatively curved manifold with $n\ge 2$. The universal cover $\tilde{M}$ is diffeomorphic to $\R^n$ and is \textit{uniquely continuously geodesic} (i.e., any pair of points is joined by a unique geodesic, and geodesics depend continuously on their endpoints), which implies that there is a way to ``straighten'' singular simplices. We recall briefly this construction, see \cite[Section 8.4]{frigerio2017bounded} for more details. First, one defines straight simplices on the universal cover $\tilde{M}$ by induction on the dimension: a $0$-dimensional straight simplex is just a map from a point to a given point $x_0\in \tilde{M}$; given $k+1$ points $x_0,\ldots,x_k\in \tilde{M}$, one defines the straight simplex $[x_0,\ldots,x_k]$ by coning $[x_0,\ldots,x_{k-1}]$ over $x_k$, i.e., by taking constant speed geodesics from $x_k$ to the support of the $(k-1)$-dimensional straight simplex $[x_0,\ldots,x_{k-1}]$. Then, given a simplex $\sigma:\Delta^k\to M$, one chooses a lift $\tilde{\sigma}:\Delta^k\to \tilde{M}$, considers the straight simplex with the same vertices as $\tilde{\sigma}$, and projects it to $M$. By extending this procedure to chains, we obtain a well defined chain map 
\[\operatorname{str}_*:C_*(M,\R)\to C_*(M,\R)\]
whose image consists of straight chains and which is chain homotopic to the identity via a bounded homotopy. Therefore, the induced map in bounded cohomology
\begin{align*}
	H_b^*(M,\R)&\longrightarrow H_b^*(M,\R)\\
	[\phi]&\longmapsto [\phi\circ\operatorname{str}]
\end{align*}
is the identity. This means that to get a well defined class in bounded cohomology, it is sufficient to define a bounded cochain on straight singular simplices.
Let now $\omega$ be the Riemannian volume form of $M$. We can define the associated integration cochain by setting \[\hat{\omega}(c)=\int_{\operatorname{str}(c)}\omega\]
for any $c\in C_n(M,\R)$. This defines a bounded singular cocycle. Indeed, by Stokes' theorem and since $\omega$ is closed, we have
\[\delta\hat{\omega}(c)=\hat{\omega}(\partial c)=\int_{\operatorname{str}(\partial c)}\omega=\int_{\partial\operatorname{str}(c)}\omega=\int_{\operatorname{str}(c)}d\omega=0\]
for any chain $c$. Moreover, it is bounded since for any straight simplex $\sigma$
\[|\hat{\omega}(\sigma)|=\vol(\sigma),\]
and straight $k$-simplices have uniformly bounded volume for $k\ge2$ \cite{inoue1982gromov} when the curvature is negative bounded away from $0$.

\subsection{Cheeger isoperimetric constant and bounded primitives}
Let $M$ be a complete Riemannian manifold of infinite volume.
\begin{defn}\label{cheegeriso}
	The \textit{Cheeger isoperimetric constant} of $M$ is defined as
	\begin{align*}
		h(M)\coloneqq\inf\limits_{U\subseteq M} \frac{\mathrm{vol}(\partial U)}{\mathrm{vol}(U)}
	\end{align*}
	where $U$ ranges over open submanifolds of $M$ with $\overline{U}$ compact and $\partial U$ smooth, and $\vol(\partial U)$ denotes the $(n-1)$-dimensional volume of $\partial U$.
\end{defn}
If $h(M)>0$, it follows directly from the definition that for any relatively compact open submanifold $U$ we have a linear isoperimetric equality:
\[\vol(U)\le\frac{1}{h(M)}\vol(\partial U).\]

The Euclidean space $\R^n$ has $h(\R^n)=0$ since $\vol(B_r)\sim r^n$ while $\vol(\partial B_r)\sim r^{n-1}$. Instead, hyperbolic space $\HH^n$ has $h(\HH^n)>0$ and more generally simply connected manifolds with negative curvature bounded away from $0$ have $h(M)>0$ \cite{brooks1983some}. For a non-simply connected example, one can take $M$ to be a hyperbolic $3$-manifold fibering over the circle with fiber a hyperbolic surface $S$ and take the infinite cyclic cover $S\times \R$ associated to $\pi_1(S)<\pi_1(M)$ with the induced metric. It can be seen that the volume of $S\times [0,n]$ grows linearly in $n$ while the area of $S\times\{n\}$ remains bounded, implying that $h(M)=0$.

Using the Riemannian structure on $M$ it is possible to define a norm on differential forms.
\begin{defn}\label{formsnorm}
	Let $\alpha\in \Omega^k(M)$ be a $k$-differential form on $M$. Its $L^{\infty}$-\textit{norm} is defined as
	\begin{align*}
		\|\alpha\|_{\infty}\coloneqq\sup\limits_{x\in M} \sup\limits_{(v_1\dots v_k)\in (T_x M)^k} \frac{\alpha(v_1,\dots,v_k)}{\|v_1\|_x\dots\|v_k\|_x}.
	\end{align*}
	A differential form $\alpha$ is said to be \textit{bounded} if $\norm{\alpha}[\infty]<\infty$.
\end{defn}
By definition, the volume form $\omega$ satisfies $\norm{\omega}[\infty]=1$.
If $M$ is of infinite volume, then it is open and $H_{\textrm{dR}}^n(M)=0$. Thus the volume form $\omega$ must have a primitive $\alpha\in \Omega^{n-1}(M)$. 

\begin{prop}\label{primitive-cheeger}
	Let $M$ be any Riemannian manifold. If the volume form $\omega$ admits a bounded primitive $\alpha$, then $h(M)>0$.
\end{prop}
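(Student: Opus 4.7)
The plan is to exploit Stokes' theorem together with the norm bound on $\alpha$. The first step is to observe that for any relatively compact open submanifold $U\subseteq M$ with smooth boundary $\partial U$, we can write
\[\vol(U)=\int_U\omega=\int_U d\alpha=\int_{\partial U}\alpha,\]
where the last equality is Stokes' theorem, applicable since $\overline{U}$ is compact and $\partial U$ is smooth, and $\partial U$ carries the orientation induced from $U$.

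The second step is to bound $\int_{\partial U}\alpha$ in terms of the $(n-1)$-volume of $\partial U$. At any point $p\in\partial U$, choose an oriented orthonormal basis $(e_1,\dots,e_{n-1})$ of $T_p\partial U$; then $(i^*\alpha)_p(e_1,\dots,e_{n-1})\le\|\alpha\|_\infty$ by \cref{formsnorm}, while the induced Riemannian volume form on $\partial U$ evaluated on the same basis is $1$. Pointwise comparison of the two $(n-1)$-forms on $\partial U$ and integration yields
\[\left|\int_{\partial U}\alpha\right|\le\|\alpha\|_\infty\cdot\vol(\partial U).\]
Combining this with the identity in the first step gives $\vol(U)\le\|\alpha\|_\infty\cdot\vol(\partial U)$, so
\[\frac{\vol(\partial U)}{\vol(U)}\ge\frac{1}{\|\alpha\|_\infty}.\]
Taking the infimum over all admissible $U$ in \cref{cheegeriso} we conclude $h(M)\ge 1/\|\alpha\|_\infty>0$.

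There is no real obstacle here: the argument is a two-line consequence of Stokes and of the definition of $\|\cdot\|_\infty$. The only mild care required is in justifying the pointwise comparison between the pullback $i^*\alpha$ and the induced volume form on $\partial U$, which is immediate from \cref{formsnorm} once one evaluates both on an orthonormal frame tangent to $\partial U$.
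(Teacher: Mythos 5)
Your proof is correct and follows exactly the paper's argument: Stokes' theorem gives $\vol(U)=\int_{\partial U}\alpha$, and the pointwise bound $|i^*\alpha|\le\|\alpha\|_\infty$ on orthonormal frames of $\partial U$ yields $\vol(U)\le\|\alpha\|_\infty\vol(\partial U)$, hence $h(M)\ge\|\alpha\|_\infty^{-1}$. The only difference is that you spell out the comparison between $i^*\alpha$ and the induced volume form, which the paper leaves implicit.
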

\begin{proof}
	Let $U\subseteq M$ be a relatively compact open submanifold with smooth boundary. By Stokes' theorem we get
	\begin{align*}
		\vol(U)=\int_U\omega=\int_Ud\alpha=\int_{\partial U}\alpha\le \|\alpha\|_{\infty}\vol(\partial U),
	\end{align*}
	which implies that $h(M)\ge \|\alpha\|_{\infty}^{-1}$.
\end{proof}
\begin{oss}\label{inverse_bounded}
	As said in the introduction, whether the converse of the above proposition holds is still open in full generality, and is known when $M$ is a locally symmetric space \cite[Theorem 1.1]{kim2015bounded} or when $M$ has bounded geometry (\cite[Theorem 1.1]{sikorav2001growth} or \cite[Remark in Section 3]{block1992aperiodic}).
\end{oss}

\begin{defn}\label{bounded_geo}
	A complete Riemannian manifold $M$ has \emph{bounded geometry} if its sectional curvatures are bounded in absolute value and its injectivity radius is positive. 
\end{defn}
Examples of manifolds with bounded geometry are PNC manifolds with positive injectivity radius and covers of compact Riemannian manifolds.

It can be easily shown that if the volume form has a bounded primitive then the bounded fundamental class vanishes:

\begin{prop}\label{boundedprim}
	Let $M$ be an infinite volume, Riemannian manifold of dimension at least $3$ with negative curvature bounded away from $0$. If its volume form $\omega$ admits a bounded primitive, then $[\hat{\omega}]=0$ in $H_b^n(M,\R)$.
\end{prop}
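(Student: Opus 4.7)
The plan is to mimic the construction of $\hat{\omega}$ one degree down: define a bounded $(n-1)$-cochain $\hat{\alpha}$ by integrating the primitive $\alpha$ on straightened chains, and show that $\delta\hat{\alpha} = \hat{\omega}$ as bounded cochains, which immediately yields $[\hat{\omega}]=0$ in $H_b^n(M,\R)$.

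More concretely, I would set
\[
\hat{\alpha}(c) \coloneqq \int_{\operatorname{str}(c)} \alpha
\]
for every $c \in C_{n-1}(M,\R)$. Since straightening is a chain map and $d\alpha = \omega$, Stokes' theorem gives
\[
\delta\hat{\alpha}(c) = \hat{\alpha}(\partial c) = \int_{\operatorname{str}(\partial c)} \alpha = \int_{\partial\operatorname{str}(c)} \alpha = \int_{\operatorname{str}(c)} d\alpha = \int_{\operatorname{str}(c)} \omega = \hat{\omega}(c)
\]
for every $c \in C_n(M,\R)$, so $\hat{\omega}$ is the coboundary of $\hat{\alpha}$ at the level of cochains.

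The only nontrivial point is that $\hat{\alpha}$ is in fact a \emph{bounded} cochain, for only then does the equation $\delta\hat{\alpha}=\hat{\omega}$ take place in $C_b^\ast(M,\R)$ and witness the vanishing of $[\hat{\omega}]$. For a single straight $(n-1)$-simplex $\sigma$ one has the pointwise estimate
\[
|\hat{\alpha}(\sigma)| \;\le\; \|\alpha\|_\infty \cdot \operatorname{vol}_{n-1}(\operatorname{str}(\sigma)),
\]
so boundedness follows provided the $(n-1)$-volumes of straight simplices are uniformly bounded. This is exactly the content of the Inoue--Yano estimate cited in \cref{bounded_volume_class}: in a strictly negatively curved manifold straight $k$-simplices have uniformly bounded volume for $k \ge 2$. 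Since $n \ge 3$ by hypothesis, $k = n-1 \ge 2$, and the estimate applies.

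The only potential subtlety, and the step I would expect to be the main obstacle if the dimension hypothesis were weakened, is precisely this uniform $(n-1)$-volume bound on straight simplices: for $k=1$ (i.e.\ $n=2$) geodesic segments in a negatively curved manifold can be arbitrarily long, so the same argument fails, and this is exactly why the statement requires $\dim M \ge 3$. Granted the uniform bound, combining the boundedness of $\hat{\alpha}$ with the identity $\delta\hat{\alpha} = \hat{\omega}$ concludes the proof.
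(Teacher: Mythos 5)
Your proposal is correct and follows essentially the same route as the paper: define $\hat{\alpha}$ by integrating $\alpha$ over straightened chains, verify $\delta\hat{\alpha}=\hat{\omega}$ by Stokes, and deduce boundedness of $\hat{\alpha}$ from the uniform volume bound on straight $(n-1)$-simplices, which needs $n-1\ge 2$. You have merely made explicit the volume estimate that the paper leaves implicit in the phrase ``since $\alpha$ is bounded and $n\ge 3$.''
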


\begin{proof}
	Let $\omega=d\alpha$ with $\alpha$ bounded. One can define the singular $(n-1)$-cochain $\hat{\alpha}$ by setting 
	\[\hat{\alpha}(c)=\int_{\operatorname{str}(c)}\alpha.\] 
	By hypothesis, $n-1\ge 2$ and thus volumes of straight $(n-1)$-simplices are uniformly bounded. Since $\alpha$ is bounded, it follows that $\hat{\alpha}\in C_b^{n-1}(M,\R)$. Now, by Stokes' Theorem, we have $\hat{\omega}=\delta \hat{\alpha}$ as cocycles and thus $[\hat{\omega}]=0$.
\end{proof}

\cref{inverse_bounded}, and \cref{boundedprim} provide a partial answer for one direction of \cref{conjecture}.

\begin{prop}\label{onedirection}
	Let $M$ be an infinite volume Riemannian manifold with negative curvature bounded away from $0$, which is either with bounded geometry or an $\R$-rank one locally symmetric space of dimension at least $3$. If $h(M)>0$, then the bounded fundamental class of $M$ vanishes.
\end{prop}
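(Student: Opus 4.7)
The plan is to assemble this statement directly from the two preceding results, since each ingredient has already been isolated in the excerpt. The overall strategy is: first upgrade the hypothesis $h(M)>0$ to the stronger analytic statement that $\omega$ admits a bounded primitive, and then turn this bounded primitive into a primitive of the integration cocycle $\hat{\omega}$ at the level of bounded cochains.

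First I would split into the two cases allowed by the hypothesis. If $M$ has bounded geometry, I invoke Sikorav's theorem \cite[Theorem 1.1]{sikorav2001growth} (equivalently the remark in \cite{block1992aperiodic}), which gives the converse of \cref{primitive-cheeger} in this setting: the positivity of $h(M)$ produces a bounded $(n-1)$-form $\alpha\in\Omega^{n-1}(M)$ with $d\alpha=\omega$. If instead $M$ is a real rank one locally symmetric space of dimension at least $3$, I invoke Kim--Kim \cite[Theorem 1.1]{kim2015bounded}, which yields the same conclusion under the assumption $h(M)>0$. Both invocations are recorded in \cref{inverse_bounded}, so at this stage no independent work is required.

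Having produced a bounded primitive $\alpha$ in either case, I apply \cref{boundedprim}: integration of $\alpha$ over straightened chains yields a cochain $\hat{\alpha}\in C^{n-1}(M,\R)$ which is bounded (since $\alpha$ is bounded and straight $(n-1)$-simplices have uniformly bounded volume in dimension $n-1\ge 2$, using the strict negative curvature hypothesis and \cite{inoue1982gromov}), and Stokes' theorem gives $\delta\hat{\alpha}=\hat{\omega}$. Therefore $[\hat{\omega}]=0$ in $H_b^n(M,\R)$.

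There is no real obstacle to this proof: it is a formal concatenation of \cref{inverse_bounded} and \cref{boundedprim}. The substantive content, namely showing that $h(M)>0$ forces the existence of a bounded primitive, is precisely what is deferred to the external references, and the genuine new work of the paper (\cref{teo1} and \cref{teo2}) is what allows one to do without these external hypotheses. The only subtle point worth flagging is the dimension restriction $n\ge 3$, which is needed to ensure that $\hat{\alpha}$ is bounded via the volume bound on straight $(n-1)$-simplices.
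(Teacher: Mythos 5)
Your proof is correct and is exactly the paper's argument: the paper derives \cref{onedirection} by combining the results recorded in \cref{inverse_bounded} (Sikorav / Block--Weinberger in the bounded geometry case, Kim--Kim in the locally symmetric case) with \cref{boundedprim}, just as you do.
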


\section{The bounded geometry case}\label{bounded_geo_case}
In this section, we prove \cref{conjecture} in the case of negatively curved manifolds with bounded geometry. This generalises the statement of \cref{kimkim2} above, which holds for PNC $3$-manifolds with bounded geometry.
From \cref{onedirection}, we already know one implication. It remains to show that the vanishing of the bounded fundamental class implies that $h(M)>0$. To do so, we use the construction of a bounded primitive given by Sikorav \cite{sikorav2001growth} under the hypothesis of $h(M)>0$ and bounded geometry. We recall the result and give a sketch of its proof, since it will be useful for our purposes.

\begin{teo}[{\cite[Theorem 1.1]{sikorav2001growth}}]\label{boundedprim2}
	Let $M$ be an infinite volume Riemannian manifold of bounded geometry. If $h(M)>0$ then the volume form $\omega$ admits a bounded primitive.
\end{teo}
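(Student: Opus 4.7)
The plan is to build $\alpha$ in two steps: a local construction using bounded geometry to produce a bounded candidate $\alpha_0$ with $d\alpha_0 = \omega + \eta$ for some bounded $n$-form $\eta$, and a global correction $\beta$ with $d\beta = -\eta$ and $\norm{\beta}[\infty] < \infty$ whose existence relies on the Cheeger inequality; the primitive is then $\alpha := \alpha_0 + \beta$.

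For the local step, I would use bounded geometry to fix $r_0 \in (0, \mathrm{inj}(M))$ small enough that every ball $B(x, r_0)$ is geodesically convex and carries normal coordinates with uniformly bounded distortion. Picking a maximal $r_0/2$-separated net $\{x_i\}_{i \in I}$, one obtains a uniformly locally finite cover $B_i := B(x_i, r_0)$, a smooth subordinate partition of unity $\{\rho_i\}$ with $\norm{\rho_i}[\infty] \le 1$ and $\norm{d\rho_i}[\infty] \le K$ for a uniform constant $K$, and on each $B_i$ a primitive $\alpha_i \in \Omega^{n-1}(B_i)$ of $\omega|_{B_i}$ obtained via the Poincar\'e homotopy along geodesic rays from $x_i$, with a uniform bound $\norm{\alpha_i}[\infty] \le C_0$. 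Setting $\alpha_0 := \sum_i \rho_i \alpha_i$ and using $\sum_i \rho_i \equiv 1$ and $d\alpha_i = \omega$ on $B_i$, one computes
\[ d\alpha_0 = \omega + \eta, \qquad \eta := \sum_i d\rho_i \wedge \alpha_i, \]
with both $\alpha_0$ and $\eta$ globally bounded in terms of $C_0$, $K$, and the multiplicity of the cover.

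For the global correction, the task reduces to producing $\beta \in \Omega^{n-1}(M)$ with $d\beta = -\eta$ and $\norm{\beta}[\infty] < \infty$. The key a priori estimate follows immediately from $h(M) > 0$: for any relatively compact open $U \subset M$ with smooth boundary,
\[ \left\lvert \int_U \eta \right\rvert \le \norm{\eta}[\infty] \cdot \vol(U) \le \frac{\norm{\eta}[\infty]}{h(M)} \cdot \vol(\partial U), \]
so the functional $U \mapsto \int_U \eta$ is bounded linearly by the perimeter. By a Hahn--Banach / Sullivan-type duality argument (working in the space of compactly supported $(n-1)$-currents with a flat-type norm, or equivalently by passing to a simplicial model where Cheeger's inequality becomes a discrete max-flow/min-cut statement), this perimeter bound is exactly what is needed to produce a bounded primitive of $\eta$. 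Combining, $\alpha := \alpha_0 + \beta$ satisfies $d\alpha = \omega$ and $\norm{\alpha}[\infty] \le \norm{\alpha_0}[\infty] + \norm{\beta}[\infty] < \infty$.

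The main obstacle is the global correction step: converting the set-theoretic isoperimetric inequality into a statement about differential forms. The duality argument yields $\beta$ only as a continuous linear functional on a space of currents, and upgrading $\beta$ to an honest bounded smooth $(n-1)$-form requires a regularization of Whitney type. It is precisely in this regularization that the full force of bounded geometry (uniform covers, partitions of unity with uniform derivative bounds, uniformly bi-Lipschitz triangulations with controlled combinatorics) is essential, and this is also why the same strategy is not known to work when bounded geometry is dropped.
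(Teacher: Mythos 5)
Your overall circle of ideas (isoperimetric bound, Hahn--Banach duality, Whitney-type regularization using bounded geometry) is the right one, and it matches Sikorav's argument as sketched in the paper. However, as written the proposal has two genuine gaps, one structural and one at the crux of the argument.

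First, the two-step decomposition is vacuous. Your ``global correction'' step asks for a bounded $\beta$ with $d\beta=-\eta$, where $\eta$ is a bounded, exact top-degree form satisfying $\lvert\int_U\eta\rvert\le \|\eta\|_\infty\,\vol(U)\le C\,\vol(\partial U)$. But $\omega$ itself is a bounded, exact top-degree form satisfying exactly the same perimeter estimate, so the correction step is word-for-word the theorem you are trying to prove (indeed, taking $\alpha_0=0$ and $\eta=-\omega$ reproduces the original problem). The local construction of $\alpha_0$ via a uniform cover and the Poincar\'e homotopy is fine, but it buys nothing: all the content is deferred to the unproven correction step. Note that the paper's proof uses the opposite decomposition: it first solves the \emph{global} problem on a discretization, and the leftover term $\omega-P^nI^n(\omega)$ is then genuinely easier because it integrates to zero on every simplex of the triangulation, so it can be killed by a purely local (quantitative de Rham) argument.

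Second, the duality step is asserted rather than proved, and the assertion skips the hard point. To apply Hahn--Banach you need the estimate $\lvert\int_T\eta\rvert\le C\,\mathrm{M}(\partial T)$ (or $\le C\|\partial T\|_1$) on a \emph{linear space} of chains or currents, whereas the isoperimetric inequality only gives it for characteristic functions of relatively compact domains $U$. Passing from domains to arbitrary real chains is a real step; in the paper this is done by fixing a triangulation with bounded geometry and proving $\lvert\hat{\omega}(T)\rvert\le C\|\partial T\|_1$ for all real simplicial $n$-chains $T$ (which requires a layer-cake/coarea-type reduction to domains), and only then invoking Hahn--Banach to get a bounded simplicial cochain $\phi$ with $\delta\phi=\hat{\omega}$. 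Finally, the upgrade from the resulting functional to a bounded smooth $(n-1)$-form --- which you correctly identify as requiring bounded geometry --- is not carried out; in the paper it is done explicitly via the Whitney smoothing map $P^{n-1}$ (which sends bounded simplicial cochains to bounded forms precisely because the triangulation has bounded geometry), followed by the constructive de Rham argument with norm control for the local error. Until the domain-to-chains estimate and the regularization are actually executed, the proof is incomplete.
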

\begin{proof}[Sketch of proof]
	From the fact that $M$ has bounded geometry, one can find a \textit{triangulation with bounded geometry}, which means that the link of each vertex contains a uniformly bounded amount of simplices, each $k$-simplex $\sigma$ is the image of a diffeomorphism $\sigma:\Delta^k\to M$ such that its Jacobian has bounded norm (such that the constant does not depend on the simplex), and this map can be extended to a neighbourhood of $\Delta^k$ with the same property (see \cite{sikorav2001growth} and \cite{attie1994quasi}).
	Let $K$ be such a triangulation on $M$. From $h(M)>0$ (which implies a linear isoperimetric inequality for domains), it can be shown that for any simplicial chain $T\in C_n(K,\R)$ one has $|\hat{\omega}(T)|\le C \|\partial T\|_1$ for some $C>0$ (hence, in particular, $\hat{\omega}$ is bounded on simplicial chains). From this inequality, via a Hahn-Banach argument, it is possible to define a bounded simplicial cocycle $\phi\in C_b^{n-1}(K,\R)$ such that $\hat{\omega}=\delta\phi$ on simplicial chains.
	Let $I^*:\Omega^*(M)\to C^*(K,\R)$ be the chain map defined by integration of differential forms over simplices of the triangulation. It is possible to define a right inverse of $I^*$ as follows. For each vertex $v\in K^{(0)}$ of the triangulation, consider the open set $\operatorname{st}(v)$ given by the interior of the union of all $n$-simplices containing $v$. Let $\{g_v:v\in K^{(0)}\}$ be a partition of unity subordinated to the open cover $\{\operatorname{st}(v):v\in K^{(0)}\}$. Let $1\le k \le n$ and for each $k$-simplex $\sigma$, denote by $\sigma^*\in C^k(M,\R)$ the dual element, that is, the cochain which takes  value $1$ on $\sigma$ and 0 otherwise. If $\sigma$ is given by its  vertices $v_0,\dots,v_k$ one defines
	\[P^k(\sigma^*)=k!\sum_{i=0}^{k}g_{v_i}dg_{v_0}\wedge\cdots\wedge\widehat{dg_{v_i}}\wedge\cdots\wedge dg_{v_k},\]
	and for any cochain $T\in C^k(K,\R)$
	\[P^k(T)=\sum_{\sigma \in K^{(k)}}T(\sigma)P^k(\sigma^*).\]
	One can check that this defines a right inverse of $I^*$ \cite[Section 6.2, Lemma 1]{singer2015lecture} and, since $K$ is of bounded geometry, it can be seen that the quantities $\norm{dg_v}[\infty]$ are uniformly bounded and thus $P^*$ sends bounded cochains to bounded differential forms.
	
	By the definition of $\hat{\omega}$, we have that $I^n(\omega)=\hat{\omega}$ on simplicial chains, hence
	\[P^nI^n(\omega)=P^n\delta\phi=d P^{n-1}(\phi),\]
	where $P^{n-1}(\phi)$ is a bounded differential form. The integral of the $n$-form $\omega-P^nI^n(\omega)$ vanishes on every $n$-simplex of $K$ and thus, following through the constructive proof of de Rham's Theorem given in \cite{singer2015lecture} while keeping track of the norms (using again that $K$ has bounded geometry), one can get a bounded primitive $\eta$. Thus we have \[\omega=P^nI^n(\omega)+d\eta=dP^{n-1}(\phi)+d\eta=d(P^{n-1}(\phi)+\eta),\]
	and $P^{n-1}(\phi)+\eta$ is a bounded primitive of $\omega$.
\end{proof}

We can now prove the following theorem, which implies \cref{teo1}. The structure of this result is the same as \cite[Theorem 1.2]{kim2015bounded} but with more general hypotheses. 
\begin{teo}
	Let $M$ be an infinite volume Riemannian manifold of dimension at least $3$ with negative curvature bounded away from $0$ and bounded geometry. Then the following are equivalent:
	\begin{enumerate}
		\item $h(M)>0$,
		\item the volume form $\omega$ has a bounded primitive,
		\item the bounded fundamental class $[\hat{\omega}]$ vanishes.
		
	\end{enumerate}
\end{teo}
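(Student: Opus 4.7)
The equivalence $(1)\Leftrightarrow(2)$ follows from \cref{primitive-cheeger} together with \cref{boundedprim2}, and $(2)\Rightarrow(3)$ is exactly \cref{boundedprim}. The only implication left to prove is $(3)\Rightarrow(2)$: given a bounded singular cochain $\psi\in C_b^{n-1}(M,\R)$ with $\delta\psi=\hat\omega$, we need to produce a bounded differential $(n-1)$-form $\alpha$ with $d\alpha=\omega$. The plan is to run the second half of the sketch of \cref{boundedprim2} in reverse, \emph{upgrading} the bounded cochain $\psi$ to a bounded differential form.

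Fix a triangulation $K$ of $M$ with bounded geometry and the associated maps $I^*:\Omega^*(M)\to C^*(K,\R)$ and $P^*:C^*(K,\R)\to\Omega^*(M)$ from the sketch of \cref{boundedprim2}, recalling that $P^*$ sends bounded simplicial cochains to bounded differential forms. The key step is to transfer $\psi$ to a bounded simplicial cochain $\bar\psi\in C_b^{n-1}(K,\R)$ satisfying $\delta\bar\psi=I^n(\omega)$. Restricting $\psi$ to chains generated by $K$-simplices, one obtains a bounded cochain with
\[\delta(\psi|_K)(\sigma)=\hat\omega(\sigma)=\int_{\operatorname{str}(\sigma)}\omega,\]
which differs from $I^n(\omega)(\sigma)=\int_\sigma\omega$ by $\int_{H(\partial\sigma)}\omega$, where $H$ is the bounded chain homotopy between the straightening operator and the identity on singular chains (the other term in the homotopy formula vanishes because $d\omega=0$). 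Setting $\chi(\tau):=\int_{H(\tau)}\omega$, the cochain $\bar\psi:=\psi|_K-\chi$ then satisfies $\delta\bar\psi=I^n(\omega)$. The boundedness of $\chi$ is crucial here: in the standard construction of $H$ (via straight-line coning in the universal cover) the image $H(\tau)$ is a linear combination of straight singular $n$-simplices with $\ell^1$-norm controlled by $\|\tau\|_1$, and straight $n$-simplices with $n\ge 2$ have uniformly bounded volume by \cite{inoue1982gromov}.

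Once $\delta\bar\psi=I^n(\omega)$ in $C_b^n(K,\R)$, applying the smoothing map gives
\[P^n I^n(\omega)=P^n\delta\bar\psi=d\,P^{n-1}(\bar\psi),\]
with $P^{n-1}(\bar\psi)$ bounded. The $n$-form $\omega-P^nI^n(\omega)$ has zero integral on every $n$-simplex of $K$, so the same constructive, norm-controlled de Rham argument from \cite{singer2015lecture} used in the proof of \cref{boundedprim2} produces a bounded $(n-1)$-form $\eta$ with $d\eta=\omega-P^nI^n(\omega)$. Then $\alpha:=P^{n-1}\bar\psi+\eta$ is a bounded primitive of $\omega$, establishing $(3)\Rightarrow(2)$. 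The main obstacle I foresee is the transfer step: one must pin down a version of the chain homotopy $H$ whose image on simplicial inputs has uniformly controlled $\omega$-mass, so that the correction cochain $\chi$ genuinely lies in $C_b^{n-1}(K,\R)$; this is where both bounded geometry and strict negative curvature enter decisively.
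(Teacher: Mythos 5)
Your argument follows essentially the same route as the paper's: $(1)\Leftrightarrow(2)$ via \cref{primitive-cheeger} and \cref{boundedprim2}, $(2)\Rightarrow(3)$ via \cref{boundedprim}, and for $(3)\Rightarrow(2)$ the restriction of the bounded singular primitive to a bounded-geometry triangulation $K$ followed by the second half of Sikorav's construction ($P^{n-1}$ applied to the simplicial primitive, plus the norm-controlled de Rham argument for $\omega-P^nI^n(\omega)$). Where you go beyond the paper is the transfer step: the paper simply restricts $\hat\omega$ and the singular primitive to simplicial chains and asserts that this ``fits exactly'' into the proof of \cref{boundedprim2}, implicitly identifying $\hat\omega|_K$ (integration over \emph{straightened} simplices) with $I^n(\omega)$ (integration over the simplices of $K$ themselves). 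You correctly note that these differ by $\delta\chi$ with $\chi(\tau)=\int_{H(\tau)}\omega$, and subtracting this correction is the right fix. The one soft spot is your justification that $\chi$ is bounded: in the usual geodesic-prism construction the simplices of $H(\tau)$ are \emph{not} straight, so the uniform volume bound for straight simplices does not apply verbatim. What rescues the step is that for $\tau$ a simplex of a bounded-geometry triangulation of a nonpositively curved manifold, both $\tau$ and $\operatorname{str}(\tau)$ are uniformly Lipschitz with uniformly bounded mutual distance, and convexity of the distance function makes the prism map uniformly Lipschitz, hence the simplices of $H(\tau)$ have uniformly bounded volume. With that repair your proof is complete and, on this point, more careful than the one in the paper.
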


\begin{proof}
	The equivalence of $1)$ and $2)$ is given by \cref{primitive-cheeger} and \cref{boundedprim2}, while the implication $2)\implies 3)$ is \cref{boundedprim}.
	Thus, it remains to prove that $3)\implies 2)$. This follows easily from  Sikorav's construction of a bounded primitive sketched in the proof of \cref{boundedprim2}. In fact, by hypothesis we have that $[\hat{\omega}]=0$ in $H_b^n(M,\R)$ which means that there exists a bounded cochain $\phi\in C_b^{n-1}(M,\R)$ such that $\hat{\omega}=\delta\phi$. Now, we fix a triangulation with bounded geometry $K$ and after restricting the cochains $\hat{\omega}$ and $\phi$ on the simplices of this triangulation, we get bounded simplicial cochains which fit exactly in the situation of the proof of \cref{boundedprim2}. Following this proof, we find the desired bounded primitive of $\omega$ as above.
\end{proof}
\section{The general case}\label{general_case}

In this section, we remove the assumption that $M$ has bounded geometry and prove:
\begin{teo}\label{Teo}
	Let $M$ be a Riemannian manifold of dimension $n$ at least $3$, with infinite volume and with negative curvature bounded away from $0$. If $h(M)>0$, then $[\hat{\omega}]=0$ in $H_b^n(M,\R)$.
\end{teo}

To prove this result, we will need to introduce the concept of currents and functions of bounded variation on a manifold.
\subsection{Currents and functions of bounded variation}\label{currents}
Let $M$ be an $n$-dimensional Riemannian manifold. A $k$-\textit{current} on $M$ is a bounded linear functional on the space $\Omega_c^k(M)$ of compactly supported smooth $k$-differential forms. Given a $k$-current $T$ one can define its \textit{mass} $\operatorname{M}(T)$ by setting 
\[\mathrm{M}(T)=\sup_{\substack{\phi\in \Omega_c^k(M) \\\|\phi\|_\infty\le 1}} |T(\phi)|.\]
Moreover, the \textit{boundary} of a $k$-current $T$ is a $(k-1)$-current $\partial T$ defined by $\partial T(\phi)=T(d\phi)$ for any $\phi\in \Omega_c^{k-1}(M)$. A current $T$ is of \textit{finite mass} if $\operatorname{M}(T)<\infty$ and is \textit{normal} if both $\operatorname{M}(T)$ and $\operatorname{M}(\partial T)$ are finite.

Let $f\in L^1(M)$. Its \emph{variation}, denoted by $|Df|(M)$, is defined as 
\[|Df|(M)\coloneqq \sup_{\substack{X\in \Gamma_c(TM) \\\|X\|\le 1}} \left\lvert\int_{M}f\operatorname{div}X\omega\right\lvert,\]
where $\Gamma_c(TM)$ denotes the space of compactly supported smooth vector fields on $M$.
If $|Df|(M)$ is finite, $f$ is said to be of \emph{bounded variation} and the vector space of all such functions is denoted by $\operatorname{BV}(M)$ (see \cite[Section 1]{miranda2007heat} for all relevant definitions and basic properties of such functions on a Riemannian manifold, and also \cite[Section 3]{ambrosio2000functions} for the classical Euclidean case).

\begin{oss}\label{variation}
The definition we give of variation is apparently slightly different from the one given in \cite[Section 1.2]{miranda2007heat}. In that paper, the definition is given by using compactly supported smooth differential $1$-forms in the integral, but the definition is the same since $\Omega_c^1(M)$ is naturally isometrically isomorphic to $\Gamma_c(TM)$ via the non-degenerate pairing induced by the metric of $M$. Moreover, one can also compute $|Df|(M)$ using compactly supported differential $(n-1)$-forms. Indeed, given any smooth vector field $X\in \Gamma(TM)$, one can consider the contraction of the volume form $\omega$ with $X$, denoted by $\iota_X\omega$.
This defines a smooth $(n-1)$-differential form and $\iota_\bullet\omega$ is an isometric isomorphism between vector fields and $(n-1)$-forms, which also preserves the property of having compact support. By definition of divergence, one has $(\operatorname{div} X)\omega=d(\iota_X\omega)$. Thus, we have the following formula for $|Df|(M)$:

\[|Df|(M)=\sup_{\substack{\alpha\in \Omega_c^{n-1}(M) \\\|\alpha\|_\infty\le 1}}\left\lvert\int_{M}fd\alpha\right\lvert.\]
\end{oss}
A consequence of the Riesz-Markov-Kakutani representation theorem \cite[Theorem 2.14]{rudin1987real} is that any $k$-current of finite mass $T$ can be represented by a nonnegative measure $\mu$ on $M$ and an $L^1(\mu)$ $k$-vector field $S:M\to \Lambda^kTM$ such that $\norm{S_x}[]=1$ for any $x\in M$, and for any $\phi\in \Omega_c^k(M)$ it holds that
\[T(\phi)=\int_{M}\langle\phi(x),S(x)\rangle d\mu(x),\]
where $\langle\cdot,\cdot\rangle$ denotes the duality pairing between $\Lambda^kT^*M$ and $\Lambda^kTM$. Moreover, $\operatorname{M}(T)=|\mu|(M)$.
The case $k=n$ has an even simpler description. In fact, one can take $S$ to be the unitary $n$-vector field obtained by gluing local orthonormal frames on $M$ for which, by definition, $\langle\omega,S\rangle=1$ holds. Since every smooth $n$-form $\phi\in \Omega^n_c(M)$ can be written as $g_{\phi}\omega$ for some $g_{\phi}\in C_c^\infty(M)$, it holds that $\langle\phi,S\rangle=\langle g_{\phi}\omega,S\rangle=g_{\phi}$ and thus

\[T(\phi)=\int_{M}g_{\phi}(x)d\mu(x).\]

If in addition $T$ is a normal $n$-current, then the measure $\mu$ is absolutely continuous with respect to the volume measure given by $\omega$ and its density is a function of bounded variation $f\in \operatorname{BV}(M)$.
It follows that the image $T(\phi)$ of $\phi\in \Omega_c^n(M)$ under $T$ is given by
\[T(\phi)=\int_{M}fg_{\phi}\omega=\int_Mf\phi\]
and the image $\partial T(\alpha)$ of $\alpha\in \Omega^{n-1}_c(M)$ by 

\[\partial T(\alpha)=T(d\alpha)=\int_{M}fd\alpha.\]

From the equality $\operatorname{M}(T)=|\mu|(M)$, one gets that $\operatorname{M}(T)=\norm{f}[L^1]$.
Moreover, one also has that $\operatorname{M}(\partial T)=|Df|(M)$ by \cref{variation}. 
\subsection{Proof of \cref{Teo}}
We will now consider normal currents associated to straight chains. Let $M$ be a Riemannian manifold with negative curvature bounded away from $0$. 
Let $c=\sum_i a_i\sigma_i\in C_k(M,\R)$ be a straight chain and $\phi\in \Omega_c^k(M)$. We can define the integral of $\phi$ on $c$ by setting

\[\int_c\phi=\sum_ia_i\int_{\sigma_i}\phi.\]

With this definition, we get a current, which we will denote by $T_c$. By Stokes' Theorem we easily get that $\partial T_c=T_{\partial c}$. Indeed, for any $(k-1)$-form $\phi$,
\[\partial T_c(\phi)=T_c(d\phi)=\int_{c}d\phi=\int_{\partial c}\phi=T_{\partial c}(\phi).\]
For these currents, we will put $\operatorname{M}(c)\coloneqq \operatorname{M}(T_c)$ and we will refer to it as the mass of the straight chain.
By definition, if $\|\phi\|_\infty\le1$ we have
\[|T_c(\phi)|=\left|\int_c\phi\right|=\left|\sum_ia_i\int_{\sigma_i}\phi\right|\le \sum_i|a_i|\left|\int_{\sigma_i}\phi\right|\le\sum_i|a_i|\vol{\sigma_i}.\]
Thus, if $k\ge 2$, $T_c$ is of finite mass. Indeed, volumes of straight $k$-simplices are uniformly bounded by a constant $v_k>0$ and thus by taking the supremum over $k$-forms we get
\[\mathrm{M}(c)\le \|c\|_1v_k<\infty.\]
If in addition $k\ge 3$, the current $T_c$ is also normal (since the same bound as above also holds for the $(k-1)$-chain $\partial c$). 
Let now $c\in C_n(M,\R)$ be a top-dimensional straight chain. Since the support of $c$ is compact, we can find a compactly supported $n$-form $\tilde{\omega}$ such that $\omega=\tilde{\omega}$ on an open set containing it. We have
\[|\hat{\omega}(c)|=\left|\int_c\omega\right|=\left|\int_c\tilde{\omega}\right|\le \mathrm{M}(c).\]
We can now prove the following proposition, which is an application of the Hahn-Banach theorem.
\begin{prop}
	Let $M$ be a Riemannian manifold of dimension $n$ at least $3$, with infinite volume and negative curvature bounded away from $0$. Suppose that the following inequality holds for singular chains: there exists $C>0$ such that for any $n$-dimensional straight chain $c\in C_n(M,\R)$ we have
	
	\[\mathrm{M}(c)\le C \mathrm{M}(\partial c).\]
	Then, $[\hat{\omega}]=0$ in $H_b^n(M,\R)$.

\end{prop}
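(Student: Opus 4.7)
My plan is to use the Hahn--Banach theorem to build an explicit bounded primitive of $\hat{\omega}$ at the level of straight singular cochains, in the spirit of Sikorav's argument from the proof of \cref{boundedprim2} but working directly in the singular complex rather than with a simplicial decomposition.

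First I would define a linear functional $\psi_0$ on the subspace $B_{n-1}^{\mathrm{s}}\subseteq C_{n-1}(M,\R)$ of boundaries of straight $n$-chains by setting
\[\psi_0(\partial c) \coloneqq \hat{\omega}(c), \qquad c\in \operatorname{str}_*(C_n(M,\R)).\]
The hypothesis makes this well-defined: if $\partial c_1=\partial c_2$ for two straight chains $c_1,c_2$, then $\mathrm{M}(c_1-c_2)\le C\,\mathrm{M}(\partial(c_1-c_2))=0$, and so $T_{c_1-c_2}=0$ as a current of finite mass. By Riesz representation the representing measure is zero, and testing against $\omega\cdot\chi$ for a bump function $\chi$ equal to $1$ on the compact set $\operatorname{supp}(c_1-c_2)$ yields $\int_{c_1-c_2}\omega=0$, that is, $\hat{\omega}(c_1)=\hat{\omega}(c_2)$.

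Next I would estimate $\psi_0$ in the $\ell^1$-norm. Using $\norm{\omega}[\infty]=1$, the hypothesis, and the uniform bound $v_{n-1}$ on volumes of straight $(n-1)$-simplices (finite precisely because $n-1\ge 2$), expanding $\partial c$ as a reduced chain and bounding masses term by term gives
\[|\psi_0(\partial c)|\le \mathrm{M}(c)\le C\,\mathrm{M}(\partial c)\le C\,v_{n-1}\norm{\partial c}[1].\]
By Hahn--Banach, $\psi_0$ extends to some $\tilde\psi\in C_b^{n-1}(M,\R)$.

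Finally I would set $\psi\coloneqq \tilde\psi\circ \operatorname{str}$, which is still bounded since $\operatorname{str}$ is $\ell^1$-non-increasing. Because $\operatorname{str}$ is idempotent on straight simplices one has $\hat{\omega}\circ\operatorname{str}=\hat{\omega}$, and combining this with the chain-map property of $\operatorname{str}$ yields
\[\delta\psi(c)=\tilde\psi(\operatorname{str}(\partial c))=\tilde\psi(\partial\operatorname{str}(c))=\psi_0(\partial\operatorname{str}(c))=\hat{\omega}(\operatorname{str}(c))=\hat{\omega}(c)\]
for every $c\in C_n(M,\R)$, whence $[\hat{\omega}]=0$. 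I expect the subtlest point to be the well-definedness of $\psi_0$: it requires passing from the distributional identity $T_{c_1-c_2}=0$ on compactly supported test forms to the numerical equality $\int_{c_1-c_2}\omega=0$ for the (non-compactly supported) volume form, which is why the bump-function argument above is needed. The only other easily-missed point is the precomposition with straightening, which ensures the primitive identity $\delta\psi=\hat{\omega}$ holds on all singular chains rather than only on straight ones.
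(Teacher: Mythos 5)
Your proof is correct and follows essentially the same route as the paper: define the primitive on boundaries by $\psi(\partial c)=\hat{\omega}(c)$, bound it via $|\hat{\omega}(c)|\le \mathrm{M}(c)\le C\,\mathrm{M}(\partial c)\le Cv_{n-1}\|\partial c\|_1$, and extend by Hahn--Banach. The only divergence is the well-definedness step, where the paper integrates a (possibly unbounded) primitive $\alpha$ of $\omega$ --- which exists since $M$ is non-compact --- and applies Stokes, while you derive $T_{c_1-c_2}=0$ from the mass hypothesis itself; both work, and your explicit precomposition with $\operatorname{str}$ at the end makes precise a point the paper leaves implicit.
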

\begin{proof}
	We look for a bounded cocycle $\psi\in C_b^{n-1}(M,\R)$ such that $\delta\psi=\hat{\omega}$.
	First, we define such a cocycle on the space $B_{n-1}(M,\R)$ of $(n-1)$-boundaries. To get a primitive of $\hat{\omega}$, we are forced to set
	
	\[\psi(\partial c)=\hat{\omega}(c)\quad \forall c\in C_{n}(M,\R).\]
	This cochain is well defined. Indeed, since $M$ is non-compact, there exists an $(n-1)$-form $\alpha$ such that $\omega=d\alpha$ and
	if we take another chain $c'$ such that $\partial c=\partial c'$, we have that
	\[\int_{c-c'}\omega=\int_{c-c'}d\alpha=\int_{\partial c-\partial c'}\alpha=0.\]
	Now, we extend this functional in order to get a bounded one defined on the whole space of chains $C_{n-1}(M,\R)$. To do so, we use the Hahn-Banach Theorem, which requires $\psi$ to be bounded on $B_{n-1}(M,\R)$. From the hypothesis and the properties of the mass of a simplex stated above, for any boundary $\partial c$ we have
	
	\[|\psi(\partial c)|=|\hat{\omega}(c)|\le\mathrm{M}(c)\le C\mathrm{M}(\partial c)\le Cv_{n-1}\|\partial c\|_1,\]
	which means exactly that $\psi$ is bounded on boundaries. Thus, we can extend it and get an element $\psi\in C_b^{n-1}(M,\R)$. By definition we have
	\[\delta\psi(c)=\psi(\partial c)=\hat{\omega}(c),\]
	showing that $[\hat{\omega}]=0$ in $H_b^n(M,\R)$.
\end{proof}

By this proposition, in order to conclude the proof of \cref{Teo} it is sufficient to prove that an isoperimetric inequality on domains of $M$ (which is equivalent to the positivity of $h(M)>0$) implies an analogous inequality for masses of straight singular chains.

\begin{prop}
	Let $M$ be a Riemannian manifold with $h(M)>0$. Then, there exists $C>0$ such that for any singular straight chain $c\in C_n(M,\R)$ we have
	\[\mathrm{M}(c)\le C \mathrm{M}(\partial c).\]
\end{prop}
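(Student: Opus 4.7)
The plan is to identify the current $T_c$ associated to the straight chain $c$ with a compactly supported BV function on $M$, and then to invoke an $L^1$-Sobolev (Federer--Fleming type) inequality which is a consequence of $h(M) > 0$. Since $\dim M = n \ge 3$, both straight $n$-simplices and straight $(n-1)$-simplices have uniformly bounded volume in $M$, so the current $T_c$ satisfies $\mathrm{M}(T_c) < \infty$ and $\mathrm{M}(\partial T_c) = \mathrm{M}(T_{\partial c}) < \infty$; it is also compactly supported, being carried by the images of finitely many straight simplices. By the representation theorem for normal top-dimensional currents recalled in \cref{currents}, there is a compactly supported $f \in \operatorname{BV}(M)$ with $\mathrm{M}(c) = \norm{f}[L^1(M)]$ and $\mathrm{M}(\partial c) = |Df|(M)$.

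The problem is thereby reduced to the Federer--Fleming inequality on $M$: if $h(M) > 0$, then for every compactly supported $f \in \operatorname{BV}(M)$,
\[\norm{f}[L^1(M)] \le \frac{1}{h(M)}\,|Df|(M).\]
Splitting $f = f^+ - f^-$ we may assume $f \ge 0$. For a smooth compactly supported nonnegative $g$, the coarea formula yields
\[\int_M |\nabla g|\,d\vol = \int_0^\infty \vol(\partial\{g > t\})\,dt, \qquad \norm{g}[L^1] = \int_0^\infty \vol(\{g > t\})\,dt,\]
and by Sard's theorem the sets $\{g > t\}$ are smooth relatively compact open submanifolds for almost every $t > 0$; applying the definition of $h(M)$ pointwise in $t$ and integrating gives the desired inequality for smooth $g$. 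For a general $f$ one invokes the density of smooth compactly supported functions in $\operatorname{BV}(M)$ in the strict sense, i.e.\ the existence of $g_j \in C_c^\infty(M)$ with $\norm{g_j - f}[L^1] \to 0$ and $\int_M |\nabla g_j|\,d\vol \to |Df|(M)$ (see \cite{miranda2007heat}). Passing to the limit gives the inequality with $C = 1/h(M)$, and combined with the representation above one obtains $\mathrm{M}(c) \le C\,\mathrm{M}(\partial c)$.

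The main technical point is the coarea-plus-approximation chain above. Although the Cheeger inequality formulated in \cref{cheegeriso} concerns only smooth relatively compact domains, coarea exhibits almost every level set of a smooth compactly supported function as a set of exactly this type, so no external upgrade of the isoperimetric bound to arbitrary sets of finite perimeter is needed. The only nontrivial input beyond coarea is the strict density of smooth functions in $\operatorname{BV}(M)$; on an arbitrary complete Riemannian manifold this is available via cutoff functions combined with heat-semigroup mollification as developed in \cite{miranda2007heat}, and it is this step that is responsible for transferring the smooth isoperimetric information encoded in $h(M)$ into an inequality valid for the BV function representing the current $T_c$.
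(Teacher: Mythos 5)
Your proposal is correct and follows essentially the same route as the paper: represent the normal top-dimensional current $T_c$ by a BV function $f$ with $\mathrm{M}(c)=\norm{f}[L^1]$ and $\mathrm{M}(\partial c)=|Df|(M)$, derive the $L^1$-Poincar\'e (Federer--Fleming) inequality for smooth compactly supported functions from $h(M)>0$ via the coarea formula, and transfer it to $f$ using the strict approximation of BV functions by smooth ones from \cite{miranda2007heat}. The only difference is that you spell out the coarea/Sard step explicitly where the paper cites \cite{chavel1995riemannian}.
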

\begin{proof}
	
	By the coarea formula \cite[par. VIII.3]{chavel1995riemannian}, the condition $h(M)>0$ implies that there exists $C>0$ such that for any compactly supported smooth function $h$ one has the following Poincar\'{e} inequality:
	
	\[\norm{h}[L^1]\le C\norm{\nabla h}[L^1].\]
	
	Let $T_c$ be the normal current associated to the singular chain $c$. As seen in \cref{currents}, it can be represented by a function $f\in \operatorname{BV}(M)$ such that $\norm{f}[L^1]=\operatorname{M}(T_c)=\operatorname{M}(c)$. 
	
	By \cite[Prop. 1.4]{miranda2007heat}, for any $u\in \operatorname{BV}(M)$ there exists a sequence $\{u_n\}\subseteq C^\infty_c(M)$ such that $u_n\to u$ in $L^1(M)$ and such that
	\[\lim_{n\to \infty} \norm{\nabla u_n}[L^1]=|Du|(M),\]
	which implies that we have
	\[\norm{u}[L^1]\le C |Du|(M).\]
	Applying this inequality to the function $f$ representing the chain $c$, we get 
	\[\operatorname{M}(c)=\norm{f}[L^1]\le C|Df|(M)=C\operatorname{M}(\partial c),\]
	which concludes the proof.
\end{proof}
\bibliographystyle{plainurl}
\bibliography{bibliografia} 
\nocite{*}

\end{document}